\documentclass[12pt]{article}
\usepackage{amssymb}
\usepackage{amsthm}
\usepackage{amsmath}
\usepackage{graphicx}
\usepackage{enumerate}
\usepackage{pict2e}
\usepackage{framed}

\DeclareMathOperator{\Id}{Id}

\DeclareMathOperator{\BId}{\mathbf{Id}}

\DeclareMathOperator{\Max}{Max}

\newtheorem{theorem}{Theorem}%[section]

\newtheorem{lemma}[theorem]{Lemma}
\newtheorem{proposition}[theorem]{Proposition}

\newtheorem{example}[theorem]{Example}
\newtheorem{corollary}[theorem]{Corollary}
\title{Orthogonal adjointness in posets with $0$ \thanks{Support of the research of the first and second author by the Czech Science Foundation (GA\v CR), project 24-14386L, and support of the research of the third author by the Austrian Science Fund (FWF), project 10.55776/PIN5424624, is gratefully acknowledged.}}
\author{Michal~Botur, Ivan~Chajda and Helmut~L\"anger$^1$}
\date{}

\begin{document}
	
\maketitle

\footnotetext[1]{Corresponding author}
	
\begin{abstract}
Motivated by the concept of polarity introduced by G.~Birkhoff for a binary relation on a set, we introduce a concept of orthogonality in a poset with $0$. A pair of operators $f$, $g$ on a poset with $0$ is called orthogonally adjoint if $f(x)$ is orthogonal to $y$ if and only if $x$ is orthogonal to $g(y)$. We characterize the existence and the uniqueness of $g$ for given $f$ and describe basic properties of orthogonal adjointness. We present constructions of orthogonally adjoint pairs in pseudocomplemented posets. If a given operator $f$ is an order-isomorphism of a pseudocomplemented poset satisfying some natural properties then the corresponding adjoint $g$ can be described explicitly. Moreover, if $f$ and $f^{-1}$ are bijective $\perp$-morphisms then they are orthogonally adjoint, too. Finally we show that a given pair of orthogonally adjoint mappings on a poset $\mathbf P$ may not be extendable to the Dedekind-McNeille completion of $\mathbf P$ and we present sufficient conditions for the existence of such an extension. We also provide sufficient conditions for the existence of an extension of orthogonally adjoint mappings to the lattice of ideals. Our results are illustrated by numerous examples.	
\end{abstract}

{\bf AMS Subject Classification:} Classification: 06A11, 06A12, 06A15, 06D15

{\bf Keywords:} Poset, pseudocomplemented poset, orthogonally adjoint operators, order-automorphism, $\perp$-morphism

The concept of adjointness was originally introduced by G.~Birkhoff \cite B under the name {\em polarity} as follows: Let $\mathbf P=(P,\le)$ be a poset, $\rho$ be a binary relation on $P$ and $f,g\colon P\to P$. Then $(f,g)$ is called an {\em adjoint pair} of $\mathbf P$ if
\[
f(x)\mathrel{\rho}y\text{ if and only if }x\mathrel{\rho}g(y)
\]
for all $x,y\in P$. This concept was treated by many authors in lattices and semilattices where the relation $\rho$ is usually the induced order $\le$. However, recently J.~Paseka, J.K.~Putra and R.~Smolka \cite{PPS} generalized this concept to so-called dynamic algebras with the help of which certain tense logics can be formalized. The mentioned authors use a kind of adjointness where the relation $\rho$ is orthogonality in an orthomodular lattice. The present authors together with M.~Kola\v r\'ik introduced already the concept of orthogonality in posets with $0$ (see e.g.\ \cite{CKL}). This motivated us to define the so-called {\em orthogonal adjointness} in posets with $0$, in particular in pseudocomplemented ones. It is worth noticing that adjointness of mappings and homomorphisms in posets has been investigated by the authors already in \cite{CL}.

In the following we often identify singletons with their unique element.

Let $\mathbf P=(P,\le)$ be a poset, $a,b\in P$ and $A,B\subseteq P$. We define
\begin{align*}
& A\le B\text{ if }x\le y\text{ for all }x\in A\text{ and all }y\in B, \\
& L(A):=\{x\in P\mid x\le A\}, \\
& U(A):=\{x\in P\mid A\le x\}.
\end{align*}
Instead of $L(\{a\})$, $L(\{a,b\})$, $L(A\cup\{a\})$, $L(A\cup B)$ and $L\big(U(A)\big)$ we simply write $L(a)$, $L(a,b)$, $L(A,a)$, $L(A,B)$ and $LU(A)$, respectively. Analogously we proceed in similar cases. Put $D(\mathbf P):=\{L(C)\mid C\subseteq P\}$. Then $\mathbf D(\mathbf P):=\big(D(\mathbf P),\subseteq\big)$ is a complete lattice, called the {\em Dedekind-MacNeille completion} of $\mathbf P$. The mapping $x\mapsto L(x)$ is an embedding of $\mathbf P$ into $\mathbf D(\mathbf P)$. $\mathbf D(\mathbf P)$ is in some sense the smallest extension of $\mathbf P$ to a complete lattice. It preserves existing joins and meets, and $P$ is join- and meet-dense in $\mathbf D(\mathbf P)$.

Now let $(P,\le,0)$ be a poset with $0$. We define
\begin{align*}
& A\perp B\text{ if }x\wedge y=0\text{ for all }x\in A\text{ and all }y\in B, \\	
& A^\perp:=\{x\in P\mid x\perp A\}, \\
& \Max A:=\text{set of all maximal elements of }(A,\le). 
\end{align*}
A mapping from $P$ to $P$ will be called a {\em mapping in $P$}. Let $M(P)$ denote the set of all mappings in $P$. A mapping from $P$ to $2^P$ will be called an {\em operator on $P$}. Let $Op(P)$ denote the set of all operators on $P$. $M(P)$ can be considered as a subset of $Op(P)$.

On $Op(P)$ define a binary relation $\le$ by
\[
f\le g\text{ if }f(x)\subseteq g(x)\text{ for all }x\in P
\]
for all $f,g\in Op(P)$. Then $(Op(P),\le)$ is a poset. For all $f\in Op(P)$ let $\Max f\in Op(P)$ be defined by $(\Max f)(x):=\Max\big(f(x)\big)$ for all $x\in P$.

Assume $f,g\in Op(P)$. We say that $f$ and $g$ are {\em orthogonally adjoint} if
\[
f(x)\perp y\text{ if and only if }x\perp g(y)
\]
for all $x,y\in P$. This is equivalent to
\[
f(x)\subseteq y^\perp\text{ if and only if }g(y)\subseteq x^\perp
\]
for all $x,y\in P$ or to
\[
y\subseteq\big(f(x)\big)^\perp\text{ if and only if }x\subseteq\big(g(y)\big)^\perp\tag{1}
\]
for all $x,y\in P$. Obviously, the constant mapping $0$ with value $0$ is orthogonally adjoint to itself. Moreover, if $f\in M(P)$ is orthogonally adjoint to $0$ then for all $x\in P$ we have $x\wedge0\big(f(x)\big)=0$ and hence $f(x)=f(x)\wedge f(x)=0$, i.e.\ $f=0$.

Let $f\in M(P)$ and $g\in Op(P)$. Then we define
\begin{align*}
f^{-1}(A) & :=\{x\in P\mid f(x)\in A\}, \\
g^{-1}(A) & :=\{x\in P\mid g(x)\subseteq A\}.
\end{align*}

We are going to characterize when in a poset $(P,\le,0)$ with $0$ for a given mapping $f$ in $P$ there exists a (unique) mapping $g$ in $P$ being orthogonally adjoint to $f$.

\begin{lemma}
Let $(P,\le,0)$ be a poset with $0$ and $f,g\in Op(P)$. Then the following are equivalent:
\begin{enumerate}[{\rm(i)}]
\item $f$ and $g$ are orthogonally adjoint,
\item for all $x\in P$ we have $f^{-1}(x^\perp)=\big(g(x)\big)^\perp$.
\end{enumerate}
\end{lemma}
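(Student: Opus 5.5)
The plan is to unfold both sides of (ii) elementwise and observe that, after renaming variables, the resulting statement is literally the defining condition of orthogonal adjointness. Since $f$ is allowed to be an arbitrary operator here, I will read $f^{-1}(A)$ according to the definition given above for operators, namely $f^{-1}(A)=\{y\in P\mid f(y)\subseteq A\}$. This agrees with the mapping version when $f\in M(P)$ and singletons are identified with their elements.

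First, fix $x\in P$ and an arbitrary $y\in P$. By definition, $y\in f^{-1}(x^\perp)$ if and only if $f(y)\subseteq x^\perp$. By the definition of $x^\perp$ this means $z\wedge x=0$ for all $z\in f(y)$, i.e.\ $f(y)\perp x$. On the other side, $y\in\big(g(x)\big)^\perp$ if and only if $y\wedge z=0$ for all $z\in g(x)$, i.e.\ $y\perp g(x)$. Hence (ii) holds for this $x$ exactly when
\[
f(y)\perp x\text{ if and only if }y\perp g(x)\text{ for all }y\in P.
\]
Quantifying over all $x$ and interchanging the names $x$ and $y$, this is precisely the condition $f(x)\perp y\Leftrightarrow x\perp g(y)$ for all $x,y\in P$, which is (i). Both implications (i)$\Rightarrow$(ii) and (ii)$\Rightarrow$(i) therefore follow from this single chain of equivalences.

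There is no real obstacle. The only care needed is to use the operator form of $f^{-1}$ and to note that $\perp$ between sets is symmetric, since $x\wedge y=y\wedge x$. That symmetry is what allows $f(y)\subseteq x^\perp$ to be rewritten as $f(y)\perp x$, and $y\subseteq(g(x))^\perp$ as $y\perp g(x)$, consistent with the reformulation (1).
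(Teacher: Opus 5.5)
Your proposal is correct and follows the paper's proof: both unfold $y\in f^{-1}(x^\perp)$ to $f(y)\wedge x=0$ and $y\in\big(g(x)\big)^\perp$ to $y\wedge g(x)=0$, which is the adjointness condition with the variables renamed. Your use of the operator form $f^{-1}(A)=\{y\mid f(y)\subseteq A\}$ is the right reading here, since $f\in Op(P)$.
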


\begin{proof}
For all $x,y\in P$ the following are equivalent:
\[
f(y)\wedge x=0,\enspace f(y)\subseteq x^\perp,\enspace y\in f^{-1}(x^\perp)
\]
and the following are equivalent.
\[
y\wedge g(x)=0,\enspace y\in\big(g(x)\big)^\perp.
\]
\end{proof}

From this lemma we immediately obtain the following theorem:

\begin{theorem}
Let $(P,\le,0)$ be a poset with $0$ and $f\in Op(P)$. Then the following hold:
\begin{enumerate}[{\rm(i)}]
\item There exists some $g\in Op(P)$ being orthogonally adjoint to $f$ if and only if for every $x\in P$ there exists some $y\in P$ with $f^{-1}(x^\perp)=y^\perp$,
\item there exists a unique $g\in Op(P)$ being orthogonally adjoint to $f$ if and only if for every $x\in P$ there exists a unique $y\in P$ with $f^{-1}(x^\perp)=y^\perp$.
\end{enumerate}
\end{theorem}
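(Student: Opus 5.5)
The plan is to derive both parts directly from the preceding Lemma, which says that $f$ and $g$ are orthogonally adjoint if and only if $f^{-1}(x^\perp)=\big(g(x)\big)^\perp$ for every $x\in P$. I read the statement in the sense announced just before the Lemma, namely a mapping $g$ in $P$ adjoint to $f$. Through the identification of singletons with their elements, $g$ is then an operator whose values are singletons, so $g(x)=y$ for some $y\in P$. The point of the proof is that this condition is pointwise: it constrains each value $g(x)$ separately, and different $x$ do not interact.

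For (i), suppose first that $g$ is orthogonally adjoint to $f$. Fix $x\in P$ and put $y:=g(x)$. The Lemma gives $f^{-1}(x^\perp)=y^\perp$, so such a $y$ exists for every $x$. Conversely, assume that for every $x\in P$ the set $S_x:=\{y\in P\mid f^{-1}(x^\perp)=y^\perp\}$ is nonempty. Choose one element $g(x)\in S_x$ for each $x$; this uses the axiom of choice when $P$ is infinite. The resulting $g$ satisfies condition (ii) of the Lemma for every $x$, hence $g$ is orthogonally adjoint to $f$.

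For (ii), the same argument shows that the mappings adjoint to $f$ are exactly the choice functions $x\mapsto g(x)\in S_x$, that is, the elements of $\prod_{x\in P}S_x$. This product has exactly one element if and only if every $S_x$ is a singleton, which is precisely the stated condition. In detail, if some $S_{x_0}$ contains two distinct elements $y_1\ne y_2$, fix a choice for all other $x$. Setting $g(x_0):=y_1$ and alternatively $g(x_0):=y_2$ then gives two different adjoints, so uniqueness fails. If every $S_x$ is a singleton, any adjoint $g$ must have $g(x)$ equal to the unique element of $S_x$, so $g$ is uniquely determined.

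The only delicate step is interpretive rather than computational: one must be clear that $g$ ranges over mappings, i.e.\ singleton-valued operators. For genuinely set-valued $g$, the condition would instead ask whether $f^{-1}(x^\perp)=A^\perp$ for some subset $A\subseteq P$, and uniqueness would concern subsets $A$. Once that reading is fixed, both parts are immediate consequences of the Lemma together with a choice of representatives.
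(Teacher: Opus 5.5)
Your proof is correct and matches the paper's argument. The paper gives no separate proof; it obtains the theorem immediately from the preceding Lemma applied pointwise, and your choice function $x\mapsto g(x)\in S_x$, together with the identification of the adjoint mappings with $\prod_{x\in P}S_x$, simply spells this out.

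Your insistence on singleton-valued $g$ is in fact necessary, because the statement fails for genuinely set-valued $g\in Op(P)$. Uniqueness in (ii) can then never hold, since replacing $g(x)$ by $g(x)\cup\{0\}$ or $g(x)\setminus\{0\}$ leaves $\big(g(x)\big)^\perp$ unchanged. The ``only if'' part of (i) fails as well: the operator $g$ of the second pair in Example~\ref{ex2} has the adjoint $f$, yet $g^{-1}(b^\perp)=\{0,b,e\}$ is not of the form $y^\perp$ with $y\in P$.
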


Recall that a poset satisfies the {\em Ascending Chain Condition} if it has no infinite ascending chains. Assume the poset $(P,\le)$ to have this condition. Then, whenever $a\in A\subseteq P$, there exists some $b\in\Max A$ satisfying $a\le b$. This shows that for any non-empty subset $A$ of $P$ the set $\Max A$ is not empty, too. Observe that any finite poset satisfies the Ascending Chain Condition.

\begin{lemma}\label{lem5}
Let $(P,\le,0)$ be a poset with $0$ satisfying the Ascending Chain Condition and $A\subseteq P$. Then $(\Max A)^\perp=A^\perp$.
\end{lemma}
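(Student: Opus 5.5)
We prove the two inclusions separately.

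The inclusion $A^\perp\subseteq(\Max A)^\perp$ holds because $\Max A\subseteq A$ and the operator $B\mapsto B^\perp$ is antitone. If $x\wedge y=0$ for all $y\in A$, then this holds in particular for all $y\in\Max A$. No chain condition is needed here.

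For the converse inclusion $(\Max A)^\perp\subseteq A^\perp$, take $x\in(\Max A)^\perp$ and an arbitrary $a\in A$; we must show $x\wedge a=0$. By the Ascending Chain Condition, as recalled just before the lemma, there is some $b\in\Max A$ with $a\le b$. Then $x\wedge b=0$. We now want to pass from $x\wedge b=0$ to $x\wedge a=0$. This is the only step needing care, because in a poset the meet $x\wedge a$ need not exist a priori. We therefore read $x\wedge a=0$ as the statement that $0$ is the meet of $x$ and $a$, i.e.\ $L(x,a)=\{0\}$, and argue via lower cones. Since $a\le b$, we have $L(x,a)\subseteq L(x,b)=L(0)=\{0\}$. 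Because $0\in L(x,a)$, this gives $L(x,a)=\{0\}$, so $x\wedge a$ exists and equals $0$.

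Hence $x\perp a$ for every $a\in A$, i.e.\ $x\in A^\perp$. The case $A=\emptyset$ is trivial: then $\Max A=\emptyset$ and both sides equal $P$. This completes the argument. The only genuine obstacle is the interpretation of orthogonality through lower cones in a poset; the rest is routine.
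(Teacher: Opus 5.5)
Your proof is correct and follows the paper's argument: the easy inclusion comes from $\Max A\subseteq A$, and for the converse the Ascending Chain Condition gives some $b\in\Max A$ above any $a\in A$, so $x\wedge b=0$ forces $x\wedge a=0$. Your lower-cone justification $L(x,a)\subseteq L(x,b)=\{0\}$ spells out the step the paper leaves implicit.
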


\begin{proof}
Because of $\Max A\subseteq A$ we have $A^\perp\subseteq(\Max A)^\perp$. Conversely, assume $b\in(\Max A)^\perp$. If $a\in A$ then according to the Ascending Chain Condition there exists some $c\in\Max A$ with $a\le c$ and hence $b\wedge c=0$ which implies $b\wedge a=0$. Therefore $b\in A^\perp$ showing $(\Max A)^\perp\subseteq A^\perp$.
\end{proof}

Some elementary properties of orthogonally adjoint operators are listed in the following lemma. Within this lemma and its proof we write $f\sim g$ whenever $f$ and $g$ are orthogonally adjoint.

\begin{lemma}\label{lem6}
Let $\mathbf P=(P,\le,0)$ be a poset with $0$, $f,g,h,F,G\in Op(P)$ and $n$ a positive integer. Then the following hold:
\begin{enumerate}[{\rm(i)}]
\item $f\sim g$ implies $g\sim f$,
\item $f\sim g\sim h\sim F$ implies $f\sim F$,
\item $f\sim g$ and $F\sim G$ imply $f\circ F\sim G\circ g$,
\item $f\sim g$ implies $f^n\sim g^n$,
\item $f\le g\le h$, $f\sim F$ and $h\sim F$ imply $g\sim F$,
\item if $\mathbf P$ satisfies the Ascending Chain Condition then $f\sim g$ is equivalent to $(\Max f)\sim g$,
\item if $f\in Op(P)$ is defined by $f(x):=x^{\perp\perp}$ for all $x\in P$ then $f$ is orthogonally adjoint to itself.
\end{enumerate}
\end{lemma}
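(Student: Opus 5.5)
We verify each item directly from the definition, using the symmetric form of orthogonality: $A\perp B$ iff $B\perp A$, since $x\wedge y=y\wedge x$. Throughout we identify $x$ with $\{x\}$, so $f(x)\perp y$ means $z\wedge y=0$ for all $z\in f(x)$.

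For (i), $f\sim g$ says $f(x)\perp y\Leftrightarrow x\perp g(y)$ for all $x,y$. Swapping the roles of $x$ and $y$ and using symmetry of $\perp$ gives $g(x)\perp y\Leftrightarrow x\perp f(y)$, which is $g\sim f$.

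For (ii), we chain the equivalences:
\[
f(x)\perp y\Leftrightarrow x\perp g(y).
\]
Since $h\sim g$ by (i), we have $x\perp g(y)\Leftrightarrow g(y)\perp x\Leftrightarrow y\perp h(x)$. Then $h\sim F$ gives $h(x)\perp y\Leftrightarrow x\perp F(y)$. Combining, $f(x)\perp y\Leftrightarrow h(x)\perp y\Leftrightarrow x\perp F(y)$.

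For (iii), the composition of operators needs an interpretation. The natural one is $(f\circ F)(x)=\bigcup_{z\in F(x)}f(z)$. With it, $(f\circ F)(x)\perp y$ iff $f(z)\perp y$ for all $z\in F(x)$. By $f\sim g$ this holds iff $z\perp g(y)$ for all $z\in F(x)$, i.e.\ iff $F(x)\perp w$ for every $w\in g(y)$. By $F\sim G$ this holds iff $x\perp G(w)$ for all $w\in g(y)$, i.e.\ iff $x\perp(G\circ g)(y)$.

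The main obstacle is that this argument needs pointwise reasoning over elements of the sets. If operators with empty values occur, the conditions become vacuous, but they remain consistent on both sides.

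Item (iv) follows from (iii) by induction on $n$. The base case is $n=1$. For the step, take $f^{n+1}=f\circ f^n$; by (iii) applied to $f\sim g$ and $f^n\sim g^n$ we get $f^{n+1}\sim g^n\circ g=g^{n+1}$.

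For (v), the relation $f\le g\le h$ gives $h(x)^\perp\subseteq g(x)^\perp\subseteq f(x)^\perp$. Hence $h(x)\perp y\Rightarrow g(x)\perp y\Rightarrow f(x)\perp y$. Both the first and the last statement are equivalent to $x\perp F(y)$, so the middle one is too.

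For (vi), Lemma \ref{lem5} gives $(\Max f(x))^\perp=f(x)^\perp$. So $(\Max f)(x)\perp y\Leftrightarrow f(x)\perp y$, and orthogonal adjointness transfers in both directions.

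For (vii), let $f(x)=x^{\perp\perp}$. We must show $x^{\perp\perp}\perp y\Leftrightarrow x\perp y^{\perp\perp}$; by symmetry of $\perp$ it suffices to prove one implication. Suppose $x^{\perp\perp}\perp y$. Since $x\in x^{\perp\perp}$, this gives $y\in x^\perp$, hence $x^{\perp\perp}\subseteq\{y\}^\perp$. Now take $t\in y^{\perp\perp}$. We want $t\wedge x=0$, i.e.\ $x\in y^\perp$, which holds because $x\perp y$. So $x\perp y^{\perp\perp}$ follows. In general $y^{\perp\perp}\perp x$ holds whenever $x\in y^\perp$, and likewise $x^{\perp\perp}\perp y$ iff $y\in x^{\perp}$, which is iff $x\perp y$. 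Thus both sides are equivalent to $x\perp y$.
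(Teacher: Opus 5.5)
Your proposal is correct and follows the paper's proof item by item: the same equivalence chains for (i)--(v), Lemma~\ref{lem5} for (vi), and for (vii) the observation that $x^{\perp\perp}\perp y$ and $x\perp y^{\perp\perp}$ are each equivalent to $x\perp y$. That observation is exactly what underlies the paper's one-line claim $a^{\perp\perp}\subseteq b^\perp\Leftrightarrow b^{\perp\perp}\subseteq a^\perp$, and your explicit reading $(f\circ F)(x)=\bigcup_{z\in F(x)}f(z)$ in (iii) is the interpretation the paper uses implicitly when it applies $f\sim g$ to the set $F(a)$.
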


\begin{proof}
\begin{enumerate}[(i)]
\item This follows directly from the definition of orthogonal adjointness.
\item If $f\sim g\sim h\sim F$ then for $a,b\in P$ the following are equivalent:
\[
f(a)\wedge b=0,\enspace a\wedge g(b)=0,\enspace g(b)\wedge a=0,\enspace b\wedge h(a)=0,\enspace h(a)\wedge b=0,\enspace a\wedge F(b)=0.
\]
\item If $f\sim g$ and $F\sim G$ then for $a,b\in P$ the following are equivalent:
\begin{align*}
(f\circ F)(a)\wedge b=0, & \enspace f\big(F(a)\big)\wedge b=0,\enspace F(a)\wedge g(b)=0,\enspace a\wedge G\big(g(b)\big)=0, \\
                         & \enspace a\wedge(G\circ g)(b)=0.
\end{align*}
\item This follows from (iii) by induction on $n$.
\item If $f\le g\le h$, $f\sim F$ and $h\sim F$ then for $a,b\in P$ any of the following statements implies the next one:
\[
g(a)\wedge b=0,\enspace f(a)\wedge b=0,\enspace a\wedge F(b)=0,\enspace h(a)\wedge b=0,\enspace g(a)\wedge b=0.
\]
\item This follows from (1), Lemma~\ref{lem5} and $\big((\Max f)(x)\big)^\perp=\Big(\Max\big(f(x)\big)\Big)^\perp=\big(f(x)\big)^\perp$ for all $x\in P$.
\item For $a,b\in P$ we have $a^{\perp\perp}\subseteq b^\perp$ if and only if $b^{\perp\perp}\subseteq a^\perp$.
\end{enumerate}
\end{proof}

It is worth noticing that the relation of orthogonal adjointness is not transitive and hence the premise in assertion (ii) of Lemma~\ref{lem6} makes sense.

In the following we present a method for generating non-trivial pairs of orthogonally adjoint operators.

Let $(P,\le,0)$ be a poset with $0$. We call a {\em subset} $A$ of $P$ {\em $\perp$-closed} if $A^{\perp\perp}=A$. This is equivalent to the fact that there exists some subset $B$ of $P$ with $B^\perp=A$. Now we can state the following:

\begin{theorem}\label{th1}
Let $(P,\le,0)$ be a poset with $0$, let $a\in P$ and $A$ and $B$ be $\perp$-closed subsets of $P$. Construct $f,g\in Op(P)$ as follows: If $a\in A$ then put $f(a):=0$. If $a\notin A$ then let $f(a)$ be a subset of $P$ satisfying $\big(f(a)\big)^\perp=B$. If $a\in B$ then put $g(a):=0$. If $a\notin B$ then let $g(a)$ be a subset of $P$ satisfying $\big(g(a)\big)^\perp=A$. Then $f$ and $g$ are orthogonally adjoint.
\end{theorem}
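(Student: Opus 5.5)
We verify the defining condition $f(x)\perp y\iff x\perp g(y)$ directly for arbitrary $x,y\in P$, by a case distinction according to whether $x\in A$ and whether $y\in B$. Two facts carry the argument. First, $0$ is orthogonal to everything, so $0^\perp=P$. Second, since $A$ and $B$ are $\perp$-closed, they are determined by their orthogonal complements, and we can use that $\perp$ is symmetric: for $u,v\in P$ we have $u\perp v$ if and only if $v\perp u$.

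\emph{Case $x\in A$.} Then $f(x)=0$, so $f(x)\perp y$ holds. For the right-hand side, if $y\in B$ then $g(y)=0$ and $x\perp g(y)$ holds trivially. If $y\notin B$ then $\big(g(y)\big)^\perp=A\ni x$, so again $x\perp g(y)$. Both sides are true.

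\emph{Case $x\notin A$, $y\in B$.} Then $g(y)=0$, so $x\perp g(y)$ holds. Also $\big(f(x)\big)^\perp=B\ni y$, so $y\perp f(x)$, i.e.\ $f(x)\perp y$. Both sides are true.

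\emph{Case $x\notin A$, $y\notin B$.} Here $\big(f(x)\big)^\perp=B$, so $f(x)\perp y$ would mean $y\in\big(f(x)\big)^\perp=B$, which is false. Similarly $x\perp g(y)$ would mean $x\in\big(g(y)\big)^\perp=A$, which is false. Both sides are false, so the equivalence holds again.

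Hence $f$ and $g$ are orthogonally adjoint. Equivalently, one could phrase this through Lemma 1: $f^{-1}(y^\perp)$ equals $P$ if $y\in B$ and equals $A$ otherwise, and this coincides with $\big(g(y)\big)^\perp$ by construction.

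The only point needing care is to use the set-valued notation correctly, namely that $f(x)\perp y$ means $y\in\big(f(x)\big)^\perp$. Given that, there is no real obstacle. Note that the argument in fact never uses the $\perp$-closedness of $A$ and $B$; that hypothesis only guarantees that sets $f(a)$, $g(a)$ with the prescribed complements exist, for instance $f(a)=B^\perp$, since $B^{\perp\perp}=B$.
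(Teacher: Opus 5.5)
Your proof is correct and is essentially the paper's argument. The paper writes one chain of equivalences showing that both $f(a)\perp b$ and $a\perp g(b)$ are equivalent to ``$a\in A$ or $b\in B$'', which is exactly what your three-case split verifies; your remark that $\perp$-closedness is needed only for the prescribed sets $f(a)$, $g(a)$ to exist is also accurate.
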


\begin{proof}
For $a,b\in P$ the following are equivalent: \\
$f(a)\wedge b=0$, \\	
$a\in A$ or $\Big(a\notin A$ and $b\in\big(f(a)\big)^\perp\Big)$, \\
$a\in A$ or $(a\notin A$ and $b\in B)$, \\
$a\in A$ or $b\in B$, \\
$b\in B$ or $a\in A$, \\
$b\in B$ or $(b\notin B$ and $a\in A)$, \\
$b\in B$ or $\Big(b\notin B$ and $a\in\big(g(b)\big)^\perp\Big)$, \\
$a\wedge g(b)=0$.
\end{proof}

\begin{example}\label{ex2}
Consider the bounded poset $(P,\le,0,1)$ visualized in Fig.~1:	
	
\vspace*{-3mm}
	
\begin{center}
\setlength{\unitlength}{7mm}
\begin{picture}(6,8)
\put(3,1){\circle*{.3}}
\put(1,3){\circle*{.3}}
\put(5,3){\circle*{.3}}
\put(3,4){\circle*{.3}}
\put(1,5){\circle*{.3}}
\put(5,5){\circle*{.3}}
\put(3,7){\circle*{.3}}
\put(3,1){\line(-1,1)2}
\put(3,1){\line(0,1)6}
\put(3,1){\line(1,1)2}
\put(3,7){\line(-1,-1)2}
\put(3,7){\line(1,-1)2}
\put(1,3){\line(0,1)2}
\put(5,3){\line(0,1)2}
\put(2.85,.3){$0$}
\put(.35,2.85){$a$}
\put(5.4,2.85){$b$}
\put(.35,4.85){$d$}
\put(5.4,4.85){$e$}
\put(3.4,3.85){$c$}
\put(2.85,7.4){$1$}
\put(0,-.75){{\rm Figure~1. Bounded poset}}
\end{picture}
\end{center}
	
\vspace*{4mm}
	
We have
\[
\begin{array}{l|c|c|c|c|c|c|c}
             x & 0 &      a      &      b      &       c       &      d      &      e      & 1 \\
\hline
       x^\perp & P & \{0,b,c,e\} & \{0,a,c,d\} & \{0,a,b,d,e\} & \{0,b,c,e\} & \{0,a,c,d\} & 0 \\
\hline
x^{\perp\perp} & 0 &  \{0,a,d\}  &  \{0,b,e\}  &    \{0,c\}    &  \{0,a,d\}  &  \{0,b,e\}  & P
\end{array}
\]
Put $A:=\{0\}$ and $B:=\{0,b,c,e\}$. Then $A=\{1\}^\perp$ and $B=\{a\}^\perp$ and hence $A$ and $B$ are $\perp$-closed. If we define
\[
\begin{array}{r|c|c|c|c|c|c|c}
x    & 0 & a & b & c & d & e & 1 \\
\hline
f(x) & 0 & a & d & d & d & a & a \\
\hline
g(x) & 0 & 1 & 0 & 0 & 1 & 0 & 1
\end{array}
\]
then $f,g\in M(P)$ are orthogonally adjoint according to Theorem~\ref{th1}. Now put $A:=\{0,a,d\}$ and $B:=\{0,b,e\}$. Then $A=\{b,c\}^\perp$ and $B=\{a,c\}^\perp$ and hence $A$ and $B$ are $\perp$-closed. If we define
\[
\begin{array}{r|c|c|c|c|c|c|c}
x    & 0 &    a    &    b    &    c    &     d     &    e    &     1 \\
\hline
f(x) & 0 &    0    & \{c,d\} & \{a,c\} &     0     & \{a,c\} &  \{c,d\} \\
\hline
g(x) & 0 & \{b,c\} &    0    & \{c,e\} & \{b,c,e\} &    0    & \{b,c,e\}
\end{array}
\]
then $f,g\in Op(P)$ are orthogonally adjoint according to Theorem~\ref{th1}.
\end{example}

The converse of Theorem~\ref{th1} is also valid under some more or less natural conditions.

\begin{theorem}\label{th4}
Let $(P,\le,0)$ be a poset with $0$ and $f,g\in\big(Op(P)\big)\setminus\{0\}$ satisfying the following conditions:
\begin{align*}
& \text{If }x\in P\text{ and }f(x)\ne0\text{ then for all }y\in P, f(x)\wedge y=0\text{ is equivalent to }g(y)=0.\tag{2} \\	
& \text{If }y\in P\text{ and }g(y)\ne0\text{ then for all }x\in P, x\wedge g(y)=0\text{ is equivalent to }f(x)=0.\tag{3}
\end{align*}
Then $f$ and $g$ are orthogonally adjoint. Moreover, $A:=\{x\in P\mid f(x)=0\}$ and $B:=\{y\in P\mid g(y)=0\}$ are $\perp$-closed subsets of $P$ such that $f$ and $g$ are constructed in the way described in Theorem~\ref{th1}.
\end{theorem}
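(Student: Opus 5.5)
Since $f\ne0$ and $g\ne0$, we may fix some $x_0$ with $f(x_0)\ne0$ and some $y_0$ with $g(y_0)\ne0$; here ``$f(x)=0$'' means $f(x)=\{0\}$ (or $f(x)\subseteq\{0\}$, which gives the same orthogonality behaviour). The plan is to first show that, for every $x$ with $f(x)\ne0$, the set $\big(f(x)\big)^\perp$ equals $B$, and dually that $\big(g(y)\big)^\perp=A$ whenever $g(y)\ne0$. This is exactly what (2) and (3) say. By (2), for $f(x)\ne0$ we have $y\in\big(f(x)\big)^\perp$ iff $f(x)\wedge y=0$ iff $g(y)=0$ iff $y\in B$. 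By (3), for $g(y)\ne0$ we have $x\in\big(g(y)\big)^\perp$ iff $f(x)=0$ iff $x\in A$.

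Next we obtain $\perp$-closedness without further work. We have $B=\big(f(x_0)\big)^\perp$, so $B$ is of the form $C^\perp$ with $C:=f(x_0)$. By the remark preceding Theorem~\ref{th1} it is therefore $\perp$-closed. Likewise $A=\big(g(y_0)\big)^\perp$ is $\perp$-closed. This is the one place where the hypothesis $f,g\ne0$ is used. Without it, $A$ or $B$ might not be expressible as a $\perp$ of anything, since e.g.\ when $f=0$ the set $B$ need not be closed.

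It then remains to check that $f$ and $g$ have the shape of Theorem~\ref{th1}. We have $f(a)=0$ exactly for $a\in A$, by the definition of $A$. For $a\notin A$ we have $\big(f(a)\big)^\perp=B$ by the first step, and symmetrically for $g$. Theorem~\ref{th1} then yields that $f$ and $g$ are orthogonally adjoint. Alternatively, one could verify adjointness directly: $f(a)\wedge b=0$ iff ($a\in A$ or $b\in B$) iff $a\wedge g(b)=0$, by splitting into the cases $f(a)=0$ versus $f(a)\ne0$ and $g(b)=0$ versus $g(b)\ne0$.

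The only delicate point is the bookkeeping between ``$f(x)=0$'' for operators, i.e.\ values in $2^P$, and the orthogonality statements. We must make sure that $f(a)=0$ implies $f(a)\perp b$ for every $b$, and that for a set $C$ the equivalence ``$C\wedge y=0$ iff $y\in C^\perp$'' is read elementwise. Once this convention is fixed, every step is a direct unpacking of (2) and (3).
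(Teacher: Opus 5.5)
Your proof is correct and essentially the paper's. Like the paper, you get $\perp$-closedness from $B=\big(f(x)\big)^\perp$ for $x\notin A$ and $A=\big(g(y)\big)^\perp$ for $y\notin B$ (such $x,y$ exist because $f,g\ne0$), and you read off the shape of Theorem~\ref{th1} from (2) and (3). The only difference is the order: the paper proves adjointness first, by exactly the four-case check you give as your alternative, whereas your main argument obtains it afterwards by applying Theorem~\ref{th1}.
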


\begin{proof}
Let $a,b\in P$. \\
If $f(a)=0=g(b)$ then $f(a)\wedge b=0=a\wedge g(b)$. \\
If $f(a)=0\ne g(b)$ then $f(a)\wedge b=0=a\wedge g(b)$. \\
If $f(a)\ne0=g(b)$ then $f(a)\wedge b=0=a\wedge g(b)$. \\
If $f(a)\ne0\ne g(b)$ then $f(a)\wedge b\ne0\ne a\wedge g(b)$. \\
Hence $f$ and $g$ are orthogonally adjoint. Since $f,g\ne0$ we have $A,B\ne P$. Because $A=\big(g(y)\big)^\perp$ for all $y\in P\setminus B$
and $B=\big(f(x)\big)^\perp$ for all $x\in P\setminus A$, $A$ and $B$ are $\perp$-closed and because of (2) and (3), $f$ and $g$ are of the form described in Theorem~\ref{th1}.
\end{proof}

\begin{lemma}
Let $A$ and $B$ be proper subsets of $P$ and $f$ and $g$ be constructed as in Theorem~\ref{th1}. Then $f$ and $g$ satisfy {\rm(2)} and {\rm(3)} and the sets $A$ and $B$ defined in Theorem~\ref{th4} coincide with the corresponding sets in Theorem~\ref{th1}.
\end{lemma}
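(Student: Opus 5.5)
The plan is to unwind the construction of Theorem~\ref{th1} and check the two claims directly. Everything rests on the fact that $A$ and $B$ are \emph{proper} subsets of $P$. First I will identify the zero sets. I claim that $\{x\in P\mid f(x)=0\}=A$ and $\{y\in P\mid g(y)=0\}=B$.

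For the first equality, if $a\in A$ then $f(a)=0$ by construction. Conversely, let $a\notin A$. Then $f(a)$ was chosen with $\big(f(a)\big)^\perp=B$. Since $0^\perp=\{0\}^\perp=P$ and $B\ne P$, the set $f(a)$ cannot be $\{0\}$, that is, $f(a)\ne0$. The same argument, with the roles of $A$ and $B$ exchanged and $A\ne P$ used, gives the equality for $g$. This settles the statement that the sets defined in Theorem~\ref{th4} coincide with those of Theorem~\ref{th1}. It also shows $f,g\ne0$: since $A\ne P$ there is some $x$ with $f(x)\ne0$, and similarly for $g$. So $f$ and $g$ lie in $\big(Op(P)\big)\setminus\{0\}$, as required in the setting of Theorem~\ref{th4}.

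Next I will verify (2). Let $x\in P$ with $f(x)\ne0$. By the previous step $x\notin A$, hence $\big(f(x)\big)^\perp=B$. For $y\in P$, the condition $f(x)\wedge y=0$ means $y\in\big(f(x)\big)^\perp$, i.e.\ $y\in B$. By the first step this is equivalent to $g(y)=0$. Condition (3) follows symmetrically. If $g(y)\ne0$ then $y\notin B$, so $\big(g(y)\big)^\perp=A$. Hence $x\wedge g(y)=0$ iff $x\in A$ iff $f(x)=0$.

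There is no real obstacle here. The only delicate point is the first step: one must use properness of $B$ (resp.\ $A$) to exclude $f(a)=0$ for $a\notin A$. Without it, the zero set of $f$ could be strictly larger than $A$, and (2) could fail.
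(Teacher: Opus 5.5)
Your proof is correct and follows the paper's argument. Properness of $B$ (resp.\ $A$) rules out $f(a)=0$ for $a\notin A$ (resp.\ $g(b)=0$ for $b\notin B$), and (2) and (3) then follow from $\big(f(x)\big)^\perp=B$ for $x\notin A$ and $\big(g(y)\big)^\perp=A$ for $y\notin B$.

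One small correction to your closing remark: (2) and (3) actually hold for the construction of Theorem~\ref{th1} even without properness. If $g(y)=0$ for some $y\notin B$, then $A=\{0\}^\perp=P$, so $f=0$ and (2) is vacuous. Hence properness is only really needed to identify the zero sets with $A$ and $B$.
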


\begin{proof}
If there would exist some $a\in P\setminus A$ satisfying $f(a)=0$ then $B=\big(f(a)\big)^\perp=0^\perp=P$, a contradiction. Hence $A=\{x\in P\mid f(x)=0\}$. Analogously, $B=\{y\in P\mid g(y)=0\}$. Now for any $x\in P\setminus A$ we have $\big(f(x)\big)^\perp=B$, i.e.\ (2) holds. Analogously, (3) can be proved. The proof of the last assertion is trivial.
\end{proof}

\begin{example}
For the poset visualized in Fig.~1 and for $f$ and $g$ from Example~\ref{ex2}, {\rm(2)} and {\rm(3)} are satisfied and hence these assumptions are natural.
\end{example}

If $\mathbf P=(P,\le,0)$ is a poset with $0$ satisfying the Ascending Chain Condition and $f\in Op(P)$ is defined by $f(x):=x^{\perp\perp}$ for all $x\in P$ then $\mathbf P$ satisfies the identity
\[
\Max\bigg(\Big(f\big(\Max(x^\perp)\big)\Big)^\perp\bigg)\approx\Max(x^{\perp\perp})\tag{4}
\]
since
\[
\Max\bigg(\Big(f\big(\Max(x^\perp)\big)\Big)^\perp\bigg)\approx\Max\Big(\big(\Max(x^\perp)\big)^{\perp\perp\perp}\Big)\approx\Max\Big(\big(\Max(x^\perp)\big)^\perp\Big)\approx\Max(x^{\perp\perp})
\]
by Lemma~\ref{lem5}.

\begin{corollary}
Let $(P,\le,0)$ be a poset with $0$ satisfying the Ascending Chain Condition, let $f\in Op(P)$ be defined by $f(x):=x^{\perp\perp}$ for all $x\in P$ and define
\[
g(x):=\Max\bigg(\Big(f\big(\Max(x^\perp)\big)\Big)^\perp\bigg)
\]
for all $x\in P$. Then $f$ and $g$ are orthogonally adjoint.
\end{corollary}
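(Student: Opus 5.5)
We reduce the claim to the self-adjointness of $f$ established in Lemma~\ref{lem6}(vii), combined with the identity (4) and Lemma~\ref{lem5}. By the reformulation (1), orthogonal adjointness of two operators depends only on the sets $\big(f(x)\big)^\perp$ and $\big(g(y)\big)^\perp$. So it suffices to show that $\big(g(y)\big)^\perp=\big(f(y)\big)^\perp$ for every $y\in P$; then $g$ may be replaced by $f$ in (1).

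The key computation goes as follows. Fix $y\in P$. By definition, $g(y)=\Max\Big(\big(f(\Max(y^\perp))\big)^\perp\Big)$, and by identity (4) this equals $\Max(y^{\perp\perp})$. We should check how $f$ acts on the subset $\Max(y^\perp)$: the intended reading is $\big(\Max(y^\perp)\big)^{\perp\perp}$, which is what the derivation of (4) uses. Then Lemma~\ref{lem5}, applicable because of the Ascending Chain Condition, gives
\[
\big(g(y)\big)^\perp=\big(\Max(y^{\perp\perp})\big)^\perp=(y^{\perp\perp})^\perp=y^{\perp\perp\perp}=\big(f(y)\big)^\perp .
\]

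Now let $x,y\in P$. Condition (1) for the pair $f,g$ reads: $y\subseteq\big(f(x)\big)^\perp$ if and only if $x\subseteq\big(g(y)\big)^\perp$. By the computation above, the right-hand side is equivalent to $x\subseteq\big(f(y)\big)^\perp$. The resulting equivalence is exactly the statement that $f$ is orthogonally adjoint to itself, which holds by Lemma~\ref{lem6}(vii). Explicitly, both sides say $x^{\perp\perp}\subseteq y^\perp$: we have $y\in x^{\perp\perp\perp}=x^\perp$ if and only if $x\in y^\perp$, and orthogonality is symmetric.

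There is an alternative route: prove $\big(\Max f\big)\sim f$ via Lemma~\ref{lem6}(vi), and then compare $g$ with $\Max f$. By (4), $g(y)=\Max(y^{\perp\perp})=(\Max f)(y)$. Hence $g=\Max f$ exactly, and Lemma~\ref{lem6}(vi),(vii) give $f\sim g$ directly.

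I expect no step to be a serious obstacle. The only delicate point is justifying identity (4) itself, namely that $\big(\Max(y^\perp)\big)^{\perp\perp\perp}=\big(\Max(y^\perp)\big)^\perp=y^{\perp\perp}$. This uses $Z^{\perp\perp\perp}=Z^\perp$ together with Lemma~\ref{lem5} applied to $y^\perp$, and the paper has already carried out that verification just before the corollary.
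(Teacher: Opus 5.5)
Your proof is correct and matches the paper's. The paper argues exactly as in your alternative route: it uses (4) to get $g(x)=\Max(x^{\perp\perp})=(\Max f)(x)$ and then invokes Lemma~\ref{lem6}(vi) and (vii). Your main route simply inlines the proof of (vi), namely Lemma~\ref{lem5} combined with reformulation (1), so it is the same argument.
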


\begin{proof}
According to (4) we have $g(x)=\Max(x^{\perp\perp})$ for all $x\in P$ and hence $f$ and $g$ are orthogonally adjoint according to {\rm(vi)} and {\rm(vii)} of Lemma~\ref{lem6} (cf.\ a similar formula for $g$ in Proposition~\ref{prop1}).
\end{proof}	

In what follows we are interested in the case when the value of orthogonally adjoint operators can be equal to $0$.

\begin{lemma}
Let $(P,\le,0,1)$ be a bounded poset, $f,g\in Op(P)$ and $a,b\in P$. Then the following hold:
\begin{enumerate}[{\rm(i)}]
\item $f(0),g(0)\subseteq\{0\}$,
\item $f(a)\subseteq\{0\}$ if and only if $g(1)\subseteq a^\perp$,
\item $a\le b$ and $f(b)\subseteq\{0\}$ implies $f(a)\subseteq\{0\}$.
\end{enumerate}
\end{lemma}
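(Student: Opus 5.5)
Throughout I read the statement, as the surrounding text intends, with $f$ and $g$ orthogonally adjoint. I also read $x\wedge y=0$ in a poset as $L(x,y)=\{0\}$. The plan is to derive all three assertions directly from the defining equivalence
\[
f(x)\perp y\text{ if and only if }x\perp g(y),
\]
using two elementary facts about a bounded poset. First, $z\perp 1$ holds if and only if $z=0$, because $L(z,1)=L(z)$. Consequently, for any subset $C\subseteq P$ we have $C\perp 1$ if and only if $C\subseteq\{0\}$. Second, $0\perp y$ holds for every $y\in P$.

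For (i), I take $x:=0$ in the defining equivalence. The right-hand side $0\perp g(y)$ is always true, so $f(0)\perp y$ for all $y\in P$. Specialising to $y:=1$ and using the first fact gives $f(0)\subseteq\{0\}$. By the symmetry of orthogonal adjointness (Lemma~\ref{lem6}(i)), the same argument applied to $g$ yields $g(0)\subseteq\{0\}$.

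For (ii), I write a chain of equivalences:
\[
f(a)\subseteq\{0\},\quad f(a)\perp 1,\quad a\perp g(1),\quad g(1)\subseteq a^\perp .
\]
The first step is the first fact above, the second is the adjointness with $x:=a$, $y:=1$, and the third is the definition of $a^\perp$.

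For (iii), I first observe that $a\le b$ implies $b^\perp\subseteq a^\perp$. Indeed, if $z\wedge b=0$ then every lower bound of $z$ and $a$ is also a lower bound of $z$ and $b$, so $L(z,a)\subseteq L(z,b)=\{0\}$. Now assume $f(b)\subseteq\{0\}$. By (ii) this gives $g(1)\subseteq b^\perp\subseteq a^\perp$, and applying (ii) again in the reverse direction yields $f(a)\subseteq\{0\}$.

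I expect no real obstacle. The only points needing care are the implicit hypothesis $f\sim g$ and the interpretation of $\wedge$ as a statement about lower bounds rather than an existing meet; both are handled above.
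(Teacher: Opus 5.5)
Your proof is correct and follows the paper's argument essentially step for step. Part (i) comes from plugging $0$ and $1$ into the adjointness equivalence, using $0^\perp=P$ and $1^\perp=\{0\}$. Part (ii) is the single equivalence $f(a)\subseteq 1^\perp$ if and only if $g(1)\subseteq a^\perp$, and part (iii) chains (ii) with $b^\perp\subseteq a^\perp$. You were also right to state explicitly that $f$ and $g$ are orthogonally adjoint: the paper's statement leaves this hypothesis implicit, but its proof uses it.
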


\begin{proof}
\begin{enumerate}[(i)]
\item $g(1)\subseteq P=0^\perp$ implies $f(0)\subseteq1^\perp=0$, and $f(1)\subseteq P=0^\perp$ implies $g(0)\subseteq1^\perp=0$.
\item $f(a)\subseteq0=1^\perp$ if and only if $g(1)\subseteq a^\perp$.
\item If $a\le b$ then everyone of the following statements implies the next one: $f(b)\subseteq0=1^\perp$, $g(1)\subseteq b^\perp$, $g(1)\subseteq a^\perp$, $f(a)\subseteq1^\perp=0$.
\end{enumerate}
\end{proof}

An important class of posets with $0$ is the class of so-called pseudocomplemented posets.

We say that $(P,\le,0)$ is {\em pseudocomplemented} if for each $a\in P$ the subset $a^\perp$ of $P$ has a greatest element $b$. This element is called the {\em pseudocomplement} of $a$ and is denoted by $a^*$. Clearly, $a\le a^{**}$ and $a^{***}=a^*$ for each $a\in P$, see e.g.\ \cite B.

One can see immediately from the definition that if $(P,\le,0,{}^*)$ is a pseudocomplemented poset then $f,g\in Op(P)$ are orthogonally adjoint if and only if
\[
f(x)\le y^*\text{ if and only if }g(y)\le x^*
\]
for all $x,y\in P$.

In the following we present a method for generating non-trivial pairs of orthogonally adjoint mappings in pseudocomplemented posets.

Let $(P,\le,0,{}^*)$ be a pseudocomplemented poset. We call an {\em element} $a$ of $P$ {\em $*$-closed} if $a^{**}=a$. This is equivalent to the fact that there exists some element $b$ of $P$ with $b^*=a$. Now we can state the following:

\begin{theorem}\label{th3}
Let $(P,\le,0,{}^*)$ be a pseudocomplemented poset, $a\in P$ and $\alpha$ and $\beta$ be $*$-closed elements of $P$. Construct $f,g\in M(P)$ as follows: If $a\le\alpha$ then put $f(a):=0$. If $a\not\le\alpha$ then let $f(a)$ be an element of $P$ satisfying $\big(f(a)\big)^*=\beta$. If $a\le\beta$ then put $g(a):=0$. If $a\not\le\beta$ then let $g(a)$ be an element of $P$ satisfying $\big(g(a)\big)^*=\alpha$. Then $f$ and $g$ are orthogonally adjoint.
\end{theorem}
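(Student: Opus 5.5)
The plan is to reduce Theorem~\ref{th3} to Theorem~\ref{th1} by passing from the elements $\alpha,\beta$ to the subsets $A:=L(\alpha)$ and $B:=L(\beta)$. The observation that drives this is that in a pseudocomplemented poset we have $x^\perp=L(x^*)$ for every $x\in P$, because $y\wedge x=0$ holds if and only if $y\le x^*$.

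First we check that $A$ and $B$ are $\perp$-closed. Since $\alpha$ is $*$-closed, $\alpha=\gamma^*$ for $\gamma:=\alpha^*$. Hence $A=L(\gamma^*)=\gamma^\perp=\{\gamma\}^\perp$, and such a set is $\perp$-closed by the characterization given before Theorem~\ref{th1}. The same argument applies to $B$.

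Next we check that $f$ and $g$, viewed as operators with singleton values, fit the construction of Theorem~\ref{th1} for these $A$ and $B$.
\begin{itemize}
\item The case conditions match: $a\in A$ if and only if $a\le\alpha$, and $a\in B$ if and only if $a\le\beta$.
\item If $a\not\le\alpha$, then $(f(a))^\perp=L\big((f(a))^*\big)=L(\beta)=B$, which is exactly what Theorem~\ref{th1} requires.
\item Symmetrically, if $a\not\le\beta$, then $(g(a))^\perp=L(\alpha)=A$.
\end{itemize}
Theorem~\ref{th1} then gives that $f$ and $g$ are orthogonally adjoint.

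There is essentially no obstacle. The only point needing care is the identity $x^\perp=L(x^*)$ together with the fact that a $*$-closed element generates a $\perp$-closed principal ideal.

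Alternatively, one can argue directly with the criterion ``$f(x)\le y^*$ iff $g(y)\le x^*$'', following the chain of equivalences in the proof of Theorem~\ref{th1}. Both sides reduce to ``$x\le\alpha$ or $y\le\beta$''. For the left side: if $x\not\le\alpha$, then $f(x)\le y^*$ iff $y\le f(x)^*=\beta$.
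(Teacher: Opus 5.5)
Your proof is correct, but your main route differs from the paper's. The paper does not invoke Theorem~\ref{th1} at all. It re-runs that theorem's chain of equivalences with ``$a\in A$'' replaced by ``$a\le\alpha$'' and ``$b\in(f(a))^\perp$'' replaced by ``$b\le(f(a))^*$''. From both sides it reaches ``$a\le\alpha$ or $b\le\beta$'', which is exactly your closing alternative.

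Your reduction instead shows that Theorem~\ref{th3} is a literal special case of Theorem~\ref{th1}. You take $A=L(\alpha)$ and $B=L(\beta)$ and use the identity $x^\perp=L(x^*)$; the paper itself later uses this identity as $a^\perp=[0,a^*]$ in the proof of Lemma~\ref{lem1}. The gain is conceptual economy: it makes explicit that the pseudocomplemented version adds nothing beyond the subset version. The cost is translating between elements and principal ideals and checking that $L(\alpha)$ and $L(\beta)$ are $\perp$-closed. The paper's direct argument is self-contained and needs neither step. Indeed, the $\perp$-closedness hypothesis is never used inside the chain of equivalences of Theorem~\ref{th1}; it only guarantees that the required values $f(a)$ and $g(a)$ can exist. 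So your verification of it serves only to meet that theorem's stated hypotheses.
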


\begin{proof}
For $a,b\in P$ the following are equivalent: \\
$f(a)\wedge b=0$, \\	
$a\le\alpha$ or $\Big(a\not\le\alpha$ and $b\le\big(f(a)\big)^*\Big)$, \\
$a\le\alpha$ or $(a\not\le\alpha$ and $b\le\beta)$, \\
$a\le\alpha$ or $b\le\beta$, \\
$b\le\beta$ or $a\le\alpha$, \\
$b\le\beta$ or $(b\not\le\beta$ and $a\le\alpha)$, \\
$b\le\beta$ or $\Big(b\not\le\beta$ and $a\le\big(g(b)\big)^*\Big)$, \\
$a\wedge g(b)=0$.
\end{proof}

\begin{example}\label{ex1}
Consider the pseudocomplemented poset $(P,\le,0,{}^*)$ depicted in Fig.~2:	

\vspace*{-3mm}

\begin{center}
\setlength{\unitlength}{7mm}
\begin{picture}(10,8)
\put(5,1){\circle*{.3}}
\put(3,3){\circle*{.3}}
\put(7,3){\circle*{.3}}
\put(1,5){\circle*{.3}}
\put(3,5){\circle*{.3}}
\put(7,5){\circle*{.3}}
\put(9,5){\circle*{.3}}
\put(5,7){\circle*{.3}}
\put(5,1){\line(-1,1)4}
\put(5,1){\line(1,1)4}
\put(5,7){\line(-2,-1)4}
\put(5,7){\line(-1,-1)2}
\put(5,7){\line(1,-1)2}
\put(5,7){\line(2,-1)4}
\put(3,3){\line(0,1)2}
\put(3,3){\line(2,1)4}
\put(7,3){\line(-2,1)4}
\put(7,3){\line(0,1)2}
\put(4.85,.3){$0$}
\put(2.35,2.85){$a$}
\put(7.4,2.85){$b$}
\put(2.35,4.85){$d$}
\put(.35,4.85){$c$}
\put(7.4,4.85){$e$}
\put(9.4,4.85){$h$}
\put(4.85,7.4){$1$}
\put(.1,-.75){{\rm Figure~2. Pseudocomplemented poset}}
\end{picture}
\end{center}

\vspace*{4mm}

The pseudocomplementation ${}^*$ is given by the following table:
\[
\begin{array}{l|c|c|c|c|c|c|c|c}
x   & 0 & a & b & c & d & e & h & 1 \\
\hline
x^* & 1 & h & c & h & 0 & 0 & c & 1
\end{array}
\]
Put $\alpha:=c$ and $\beta:=h$. Then $\alpha$ and $\beta$ are $*$-closed. If we define
\[
\begin{array}{r|c|c|c|c|c|c|c|c}
x    & 0 & a & b & c & d & e & h & 1 \\
\hline
f(x) & 0 & 0 & a & 0 & a & c & a & c \\
\hline
g(x) & 0 & h & 0 & h & b & b & 0 & b
\end{array}
\]
then $f$ and $g$ are orthogonally adjoint according to Theorem~\ref{th3}.
\end{example}

\begin{lemma}\label{lem1}
Let $(P,\le,0,{}^*)$ be a pseudocomplemented poset and $a\in P$. Then the following hold:
\begin{enumerate}[{\rm(i)}]
\item If $f,g\in Op(P)$ are orthogonally adjoint then the following are equivalent:
\[
f(a)\le a^*,\enspace f(a^{**})\le a^*,\enspace g(a)\le a^*,\enspace g(a^{**})\le a^*.
\]
\item if $f\in M(P)$ is defined by $f(x):=x^{**}$ for all $x\in P$ then $f$ is orthogonally adjoint to itself.
\end{enumerate}
\end{lemma}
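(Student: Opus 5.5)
The plan is to reduce everything to the basic reformulation stated just before Theorem~\ref{th3}: in a pseudocomplemented poset, $f$ and $g$ are orthogonally adjoint if and only if $f(x)\le y^*$ is equivalent to $g(y)\le x^*$ for all $x,y\in P$. Here, for an operator, $f(x)\le y^*$ means every element of $f(x)$ lies below $y^*$, i.e.\ $f(x)\subseteq y^\perp$. Two standard facts will be used throughout: $x^\perp=L(x^*)$, and $a^{***}=a^*$. From these I also get $(a^{**})^\perp=L(a^{***})=L(a^*)=a^\perp$, so $a$ and $a^{**}$ have the same orthogonal set.

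For (i) I will close a cycle of equivalences.

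First, $f(a)\le a^*$ is equivalent to $g(a)\le a^*$. This is just the adjointness condition with $x=y=a$.

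Second, $f(a^{**})\le a^*$ is equivalent to $g(a)\le a^{***}=a^*$. This is the adjointness condition with $x=a^{**}$ and $y=a$.

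Third, $g(a^{**})\le a^*$ is equivalent to $f(a)\le a^{***}=a^*$. This is the adjointness condition with $x=a$ and $y=a^{**}$, read in the other direction.

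Chaining these gives
\[
f(a^{**})\le a^*\iff g(a)\le a^*\iff f(a)\le a^*\iff g(a^{**})\le a^*,
\]
so all four statements are equivalent. The only point needing care is that substituting $a^{**}$ for $x$ or $y$ produces a triple star, which collapses to $a^*$.

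For (ii) I need $x^{**}\le y^*$ to be equivalent to $y^{**}\le x^*$ for all $x,y$. I will show that both are equivalent to $x\perp y$.

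Suppose $x^{**}\le y^*$. Since $x\le x^{**}$, we get $x\le y^*$, hence $x\wedge y=0$.

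Conversely, suppose $x\wedge y=0$. Then $y\le x^*$. Pseudocomplementation is antitone, because $u\le v$ implies $v^\perp\subseteq u^\perp$. Hence $x^{**}\le y^*$.

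By symmetry, $y^{**}\le x^*$ is also equivalent to $x\perp y$, which proves the equivalence. Alternatively, (ii) can be obtained from Lemma~\ref{lem6}(vii), since $(x^{**})^\perp=(x^{\perp\perp})^\perp$ when both are written as down-sets.

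The main obstacle I expect is bookkeeping rather than substance: keeping consistent the meaning of $\le$ for operator values and the antitonicity of ${}^*$. Neither causes real difficulty.
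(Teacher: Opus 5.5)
Your proof is correct. Part (i) is exactly the paper's argument: the same three instances $(x,y)=(a^{**},a)$, $(a,a)$, $(a,a^{**})$ of $f(x)\le y^*\iff g(y)\le x^*$, joined by $a^{***}=a^*$.

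For (ii) your main argument takes a different route. You show directly that $x^{**}\perp y$ and $x\perp y^{**}$ are both equivalent to $x\perp y$; this is just your preliminary observation $(x^{**})^\perp=x^\perp$ applied twice. The paper instead calls (ii) a special case of Lemma~\ref{lem6}(vii), using $x^{\perp\perp}=[0,x^{**}]$. Strictly, that is not a literal special case. The mapping $x\mapsto x^{**}$ and the operator $x\mapsto x^{\perp\perp}=[0,x^{**}]$ are different. One needs the extra remark, which you give in your ``alternatively'' sentence, that they have the same orthogonal sets. By (1), orthogonal adjointness depends only on the sets $\big(f(x)\big)^\perp$, so that remark closes the gap; the paper leaves it implicit.

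Your direct route is self-contained and avoids this step. It also gives slightly more: $x\mapsto x^{**}$ is orthogonally adjoint to the identity as well. The paper's route shows (ii) as the pseudocomplemented form of the general poset fact in Lemma~\ref{lem6}(vii).
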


\begin{proof}
\begin{enumerate}[(i)]
\item If $f,g\in Op(P)$ are orthogonally adjoint then the following are equivalent:
\[
f(a^{**})\le a^*,\enspace g(a)\le a^{***},\enspace g(a)\le a^*,\enspace f(a)\le a^*,\enspace f(a)\le a^{***},\enspace g(a^{**})\le a^*.
\]
\item This is a special case of (vii) of Lemma~\ref{lem6} since we have $a^\perp=[0,a^*]$ and $a^{\perp\perp}=[0,a^{**}]$.
\end{enumerate}
\end{proof}

\begin{lemma}
Let $\mathbf P=(P,\wedge,0)$ be a meet-semilattice with $0$ and $f$ an automorphism of $\mathbf P$. Then $f$ and $f^{-1}$ are orthogonally adjoint.	
\end{lemma}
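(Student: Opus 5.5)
The plan is to verify the defining condition of orthogonal adjointness directly: for all $x,y\in P$ we must show that $f(x)\wedge y=0$ holds if and only if $x\wedge f^{-1}(y)=0$. Since $f$ is an automorphism of $(P,\wedge,0)$, it is a bijection preserving $\wedge$ and $0$, so $f^{-1}$ is again an automorphism. In particular $f(0)=0$ and $f^{-1}(0)=0$. Here the preservation of $0$ follows either from $0$ being a constant of the signature or, if one prefers, from the fact that a meet-preserving bijection is an order-automorphism and hence maps the least element to the least element.

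The key step is the computation
\[
f\big(x\wedge f^{-1}(y)\big)=f(x)\wedge f\big(f^{-1}(y)\big)=f(x)\wedge y ,
\]
which uses only that $f$ preserves meets and that $f\circ f^{-1}$ is the identity. From this, $x\wedge f^{-1}(y)=0$ implies $f(x)\wedge y=f(0)=0$. Conversely, if $f(x)\wedge y=0$, then $f\big(x\wedge f^{-1}(y)\big)=0=f(0)$, and injectivity of $f$ gives $x\wedge f^{-1}(y)=0$. This establishes the required equivalence for all $x,y\in P$, so $f$ and $f^{-1}$ are orthogonally adjoint.

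There is no real obstacle here. The only point requiring care is to justify $f(0)=0$, and this is immediate as explained above. Symmetry (Lemma~\ref{lem6}(i)) is not even needed, since the equivalence is proved in both directions at once.
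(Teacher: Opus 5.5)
Your proof is correct and is essentially the paper's argument. The paper applies $f^{-1}$ to $f(a)\wedge b$ and uses that $f^{-1}$ preserves $\wedge$ and $0$ and is injective, while you apply $f$ to $x\wedge f^{-1}(y)$ and use the same three properties of $f$.
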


\begin{proof}
For $a,b\in P$ the following are equivalent:
\[
f(a)\wedge b=0,\enspace f^{-1}\big(f(a)\wedge b\big)=0,\enspace f^{-1}\big(f(a)\big)\wedge f^{-1}(b)=0,\enspace a\wedge f^{-1}(b)=0.
\]
\end{proof}

\begin{example}\label{ex3}
Consider the bounded poset $(P,\le,0,1)$ from Fig.~1 and the following $f\in M(P)$:
\[
\begin{array}{l|c|c|c|c|c|c|c}
x    & 0 & a & b & c & d & e & 1 \\
\hline
f(x) & 0 & a & b & c & a & b & 0
\end{array}
\]
We have
\[
\begin{array}{l|c|c|c|c|c|c|c}
x^\perp                                & P & \{0,b,c,e\} & \{0,a,c,d\} & \{0,a,b,d,e\} & \{0,b,c,e\} & \{0,a,c,d\} & 0 \\
\hline
f^{-1}(x^\perp)                        & P & \{0,b,c,e\} & \{0,a,c,d\} & \{0,a,b,d,e\} & \{0,b,c,e\} & \{0,a,c,d\} & 0 \\
\hline
\big(f^{-1}(x^\perp)\big)^\perp        & 0 &  \{0,a,d\}  &  \{0,b,e\}  &    \{0,c\}    & \{0,a,d\}   &  \{0,b,e\}  & P \\
\hline
\big(f^{-1}(x^\perp)\big)^{\perp\perp} & P & \{0,b,c,e\} & \{0,a,c,d\} & \{0,a,b,d,e\} & \{0,b,c,e\} & \{0,a,c,d\} & 0
\end{array}
\]
One can see that $\big(f^{-1}(x^\perp)\big)^{\perp\perp}=f^{-1}(x^\perp)$ for all $x\in P$. That $f$ and $g$, where $g(x):=\big(f^{-1}(x^\perp)\big)^\perp$ for all $x\in P$, are orthogonally adjoint, follows from the next proposition.
\end{example}

We are going to show that if in a poset $(P,\le,0)$ with $0$, the mapping $f$ in $P$ satisfies the identity
\[
\big(f^{-1}(x^\perp)\big)^{\perp\perp}\approx f^{-1}(x^\perp)
\]
which is satisfied in Example~\ref{ex3} then an operator $g$ on $P$ forming an orthogonally adjoint pair together with $f$ can be constructed in explicit way, see the following result.

\begin{proposition}\label{prop1}
Let $(P,\le,0)$ be a poset with $0$ and $f\in M(P)$, assume $\big(f^{-1}(x^\perp)\big)^{\perp\perp}=f^{-1}(x^\perp)$ for all $x\in P$ and put $g(x):=\big(f^{-1}(x^\perp)\big)^\perp$ for all $x\in P$. Then $f$ and $g$ are orthogonally adjoint.
\end{proposition}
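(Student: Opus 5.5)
The plan is to reduce the statement to the first Lemma, i.e.\ the characterization in which $f$ and $g$ are orthogonally adjoint if and only if $f^{-1}(x^\perp)=\big(g(x)\big)^\perp$ for all $x\in P$. Then only this identity has to be verified for the specific operator $g(x):=\big(f^{-1}(x^\perp)\big)^\perp$, viewed as an element of $Op(P)$ with values in $2^P$.

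To verify the identity, fix $x\in P$ and compute directly:
\[
\big(g(x)\big)^\perp=\Big(\big(f^{-1}(x^\perp)\big)^\perp\Big)^\perp=\big(f^{-1}(x^\perp)\big)^{\perp\perp}=f^{-1}(x^\perp).
\]
The last equality is exactly the hypothesis of the proposition. Hence condition (ii) of the Lemma holds, so (i) holds and $f$ and $g$ are orthogonally adjoint.

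Alternatively, I would argue directly from the definition as a sanity check. For $a,b\in P$, we have $f(a)\wedge b=0$ iff $a\in f^{-1}(b^\perp)$. By the hypothesis this holds iff $a\in\big(f^{-1}(b^\perp)\big)^{\perp\perp}=\big(g(b)\big)^\perp$, which in turn holds iff $a\wedge g(b)=0$.

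I do not expect a genuine obstacle here. The only point needing care is to remember that the Lemma is formulated for operators, so $g$ taking subsets of $P$ as values is admissible. I would also note that $\big(f^{-1}(x^\perp)\big)^{\perp\perp}=f^{-1}(x^\perp)$ is precisely the statement that $f^{-1}(x^\perp)$ is $\perp$-closed. This matches the existence criterion in part (i) of the Theorem following the Lemma, with the witness being the set $g(x)$ rather than a single element.
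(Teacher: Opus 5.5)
Your proof is correct and is essentially the paper's own argument. The paper verifies the definition directly through the chain $f(a)\subseteq b^\perp \iff a\in f^{-1}(b^\perp)\iff a\in\big(f^{-1}(b^\perp)\big)^{\perp\perp}\iff \big(f^{-1}(b^\perp)\big)^\perp\subseteq a^\perp\iff g(b)\subseteq a^\perp$, which is exactly your ``sanity check,'' while your main route packages the same identity $\big(g(x)\big)^\perp=\big(f^{-1}(x^\perp)\big)^{\perp\perp}=f^{-1}(x^\perp)$ through the first Lemma.
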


\begin{proof}
For $a,b\in P$ the following are equivalent:
\[
f(a)\subseteq b^\perp,\enspace a\subseteq f^{-1}(b^\perp),\enspace a\subseteq\big(f^{-1}(b^\perp)\big)^{\perp\perp},\enspace \big(f^{-1}(b^\perp)\big)^\perp\subseteq a^\perp,\enspace g(b)\subseteq a^\perp.
\]
\end{proof}

Observe that in Example~\ref{ex3} and Proposition~\ref{prop1}, $f^{-1}$ does not denote the mapping being inverse to $f$ ($f$ need not be bijective), but for each $A\subseteq P$, $f^{-1}(A)$ denotes the set $\{x\in P\mid f(x)\in A\}$.

\begin{lemma}\label{lem2}
Let $(P,\le,0,{}^*)$ be a pseudocomplemented poset, assume $f\in Op(P)$ and $g\in M(P)$ to be orthogonally adjoint and suppose that for all $a,b\in P$, $g(b)\le a$ is equivalent to $g(b)\le a^{**}$. Then $g$ is uniquely determined by $f$.
\end{lemma}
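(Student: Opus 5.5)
Suppose $g,h\in M(P)$ are both orthogonally adjoint to $f$, and both satisfy the hypothesis that for all $a,b\in P$, $g(b)\le a$ is equivalent to $g(b)\le a^{**}$, and likewise for $h$. The aim is to show $g(b)=h(b)$ for every $b\in P$. The plan is to show that the set of $*$-closed elements above $g(b)$ determines $g(b)$ completely, and that this set is itself determined by $f$.

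First, for fixed $b$, describe the $*$-closed upper bounds of $g(b)$ via $f$. In a pseudocomplemented poset, orthogonal adjointness reads $g(b)\le x^*$ if and only if $f(x)\le b^*$. Here $f(x)\le b^*$ means $f(x)\subseteq L(b^*)$, i.e.\ $f(x)\perp b$. Since every $*$-closed element has the form $x^*$, the set
\[
S_b:=\{x^*\mid x\in P,\ f(x)\perp b\}
\]
is exactly the set of $*$-closed elements above $g(b)$. The same argument applied to $h$ shows that $S_b$ is also exactly the set of $*$-closed elements above $h(b)$. So $g(b)$ and $h(b)$ lie below the same $*$-closed elements.

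Second, use the hypothesis to pass from $*$-closed upper bounds to all upper bounds. For any $a\in P$, we have $g(b)\le a$ iff $g(b)\le a^{**}$ iff $a^{**}\in S_b$. The same chain gives $h(b)\le a$ iff $a^{**}\in S_b$, because $a^{**}=(a^*)^*$ is $*$-closed. Hence $U(g(b))=U(h(b))$.

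Finally, take $a:=g(b)$: then $h(b)\le g(b)$, and symmetrically $g(b)\le h(b)$, so $g(b)=h(b)$ by antisymmetry.

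The only delicate point is whether the equivalence hypothesis is meant for $g$ only or for every candidate adjoint. If it is meant for $g$ only, I would adjust the last step. Taking $a:=h(b)^{**}$ gives $h(b)^{**}\in S_b$, so $g(b)\le h(b)^{**}$; this still needs a further step to reach $g(b)\le h(b)$. I expect the intended reading is the symmetric one, in which the argument above closes cleanly. The main step is the translation $g(b)\le x^*\iff f(x)\perp b$, which fully encodes $f$.
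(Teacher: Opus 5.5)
Your proof is correct and is essentially the paper's argument. The paper shows $g(b)\le a\iff g(b)\le a^{**}\iff f(a^*)\le b^*$, so $g(b)$ is the least element of a set defined by $f$ alone; this is the same as your observation that $U(g(b))=U(h(b))$ is determined by $S_b$. The symmetric reading you chose is indeed the right one. Under the asymmetric reading, your $g(b)\le h(b)^{**}$ together with the hypothesis on $g$ at $a:=h(b)$ already gives $g(b)\le h(b)$, so that ``further step'' is immediate. What can genuinely fail is the reverse inequality $h(b)\le g(b)$: on the chain $0<c<1$ with $f=h=\mathrm{id}$ and $g(0)=0$, $g(c)=g(1)=c$, both $g$ and $h$ are orthogonally adjoint to $f$, $g$ satisfies the hypothesis, and $g\ne h$.
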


\begin{proof}
For $a,b\in P$ the following are equivalent:
\[
g(b)\le a,\enspace g(b)\le a^{**},\enspace f(a^*)\le b^*.
\]
Hence for every $y\in P$ the element $g(y)$ is the smallest element $x$ of $P$ satisfying $f(x^*)\le y^*$.
\end{proof}

Recall that an {\em order-automorphism} of a poset $(P,\le)$ is a bijection $f$ from $P$ to $P$ satisfying
\[
x\le y\text{ if and only if }f(x)\le f(y)
\]
for all $x,y\in P$.

\begin{theorem}\label{th2}
Let $\mathbf P=(P,\le,0,{}^*)$ be a pseudocomplemented poset and $f$ an order-automorphism of $\mathbf P$, assume that for all $a,b\in P$, $\big(f^{-1}(b^*)\big)^*\le a^*$ implies $a\le f^{-1}(b^*)$, define $g\in M(P)$ by $g(x):=\big(f^{-1}(x^*)\big)^*$ for all $x\in P$ and assume that for all $a,b\in P$, $g(b)\le a$ is equivalent to $g(b)\le a^{**}$. Then $g$ is the unique mapping in $P$ being orthogonally adjoint to $f$.
\end{theorem}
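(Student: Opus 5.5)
We prove two things: that $f$ and $g$ are orthogonally adjoint, and that $g$ is the only mapping in $P$ with this property. Uniqueness will follow at once from Lemma~\ref{lem2}. Indeed, $g\in M(P)$, and the hypothesis that $g(b)\le a$ is equivalent to $g(b)\le a^{**}$ is exactly the assumption of that lemma. So once adjointness is established, any mapping orthogonally adjoint to $f$ must coincide with $g$. The main work is therefore adjointness.

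For adjointness we use the pseudocomplemented reformulation stated before Theorem~\ref{th3}. We must show, for all $a,b\in P$,
\[
f(a)\le b^*\text{ if and only if }g(b)\le a^*.
\]
Since $f$ is an order-automorphism, it is a bijection, so $f^{-1}$ is a genuine inverse and is order-preserving. Hence $f(a)\le b^*$ is equivalent to $a\le f^{-1}(b^*)$.

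We then argue the two directions separately.

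\emph{Forward direction.} Suppose $a\le f^{-1}(b^*)$. Pseudocomplementation is antitone, since $x\le y$ gives $y^\perp\subseteq x^\perp$ and hence $y^*\le x^*$. Therefore $g(b)=\big(f^{-1}(b^*)\big)^*\le a^*$.

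\emph{Converse direction.} Suppose $g(b)\le a^*$, that is, $\big(f^{-1}(b^*)\big)^*\le a^*$. The first extra hypothesis of the theorem yields exactly $a\le f^{-1}(b^*)$, and hence $f(a)\le b^*$.

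The chain of equivalences then reads
\[
f(a)\le b^*,\enspace a\le f^{-1}(b^*),\enspace \big(f^{-1}(b^*)\big)^*\le a^*,\enspace g(b)\le a^*,
\]
and this gives orthogonal adjointness.

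The only potentially delicate point is the converse direction, because in general $x^*\le a^*$ only yields $a\le x^{**}$, not $a\le x$. That gap is precisely what the first assumption is there to bridge, so no real obstacle remains. Equivalently, one could try to show that $f^{-1}(b^*)$ is $*$-closed, since an automorphism preserves pseudocomplements; but invoking the hypothesis directly is simpler, and it is what we do.
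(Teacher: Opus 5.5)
Your proof is correct and is the same as the paper's. Both use the chain $f(a)\le b^*$, $a\le f^{-1}(b^*)$, $\big(f^{-1}(b^*)\big)^*\le a^*$, $g(b)\le a^*$: antitonicity of ${}^*$ gives one direction, the first extra hypothesis gives the other, and uniqueness is then taken from Lemma~\ref{lem2}. Your side remark is also right: an order-automorphism satisfies $f(x^*)=f(x)^*$, so $f^{-1}(b^*)=\big(f^{-1}(b)\big)^*$ is $*$-closed, and the first extra hypothesis in fact holds automatically.
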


\begin{proof}
For $a,b\in P$ the following are equivalent:
\[
f(a)\le b^*,\enspace f(a)\le f\big(f^{-1}(b^*)\big),\enspace a\le f^{-1}(b^*),\enspace \big(f^{-1}(b^*)\big)^*\le a^*,\enspace g(b)\le a^*.
\]
Hence $f$ and $g$ are orthogonally adjoint. That $g$ is the unique mapping in $P$ such that $f$ and $g$ are orthogonally adjoint, follows from Lemma~\ref{lem2}.
\end{proof}

Note that if $\mathbf P$ satisfies the identity $x^{**}\approx x$ then the two assumptions in Theorem~\ref{th2} are automatically fulfilled.

Now we are interested in special mappings on posets, the so-called $\perp$-morphisms, and their relationship to orthogonal adjointness.

Let $\mathbf P=(P,\le,0)$ and $\mathbf Q=(Q,\le,0)$ be posets with $0$. An {\em $\perp$-morphism from $\mathbf P$ to $\mathbf Q$} is a mapping $f$ from $P$ to $Q$ satisfying
\[
x\perp y\text{ implies }f(x)\perp f(y)
\]
for all $x,y\in P$. An $\perp$-morphism from $\mathbf P$ to $\mathbf P$ is called an {\em $\perp$-morphism of $\mathbf P$}.

At first we show that these mappings are not exceptional.

\begin{lemma}
Let $\mathbf P=(P,\le,0)$ be a poset with $0$ and $f\in Op(P)$ satisfying $f(x)\le x$ for all $x\in P$. Then $f$ is an $\perp$-morphism of $\mathbf P$.
\end{lemma}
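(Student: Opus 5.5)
The proof will go straight from the definitions. The one point that needs care is how to read the hypothesis $f(x)\le x$ when $f(x)$ is a subset of $P$. Following the convention for $A\le B$ given above, with singletons identified with their elements, it means that every element $u\in f(x)$ satisfies $u\le x$. We must show that whenever $x\perp y$, we also have $f(x)\perp f(y)$, that is, $u\wedge v=0$ for all $u\in f(x)$ and all $v\in f(y)$.

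So fix $x,y\in P$ with $x\perp y$, which means $x\wedge y=0$. Now take arbitrary $u\in f(x)$ and $v\in f(y)$. Then $u\le x$ and $v\le y$. Let $w$ be any common lower bound of $u$ and $v$. Then $w\le u\le x$ and $w\le v\le y$, so $w\in L(x,y)$. Since $x\wedge y=0$, the set $L(x,y)$ is exactly $\{0\}$, and hence $w=0$. Thus $L(u,v)=\{0\}$, which means $u\wedge v$ exists and equals $0$.

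This holds for every $u\in f(x)$ and every $v\in f(y)$, so $f(x)\perp f(y)$. Hence $f$ is a $\perp$-morphism of $\mathbf P$.

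No step is a real obstacle. The only subtlety is that $\wedge$ need not exist in a general poset, so "$a\wedge b=0$" should be read as "$L(a,b)=\{0\}$". Under that reading, the monotonicity argument above goes through: shrinking to smaller elements can only shrink the set of common lower bounds.
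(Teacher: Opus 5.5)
Your proposal is correct and follows essentially the same argument as the paper: for $u\in f(x)$, $v\in f(y)$, any common lower bound of $u$ and $v$ is a common lower bound of $x$ and $y$, so it equals $0$. Reading $a\wedge b=0$ as $L(a,b)=\{0\}$ is the right way to make this rigorous in an arbitrary poset.
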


\begin{proof}
If $a,b\in P$, $a\perp b$, $c\in f(a)$, $d\in f(b)$ and $e\le c,d$ then $e\le c\le a$ and $e\le d\le b$ which together with $a\perp b$ yields $e=0$ showing $f(a)\perp f(b)$.
\end{proof}

\begin{example}
Consider the poset $\mathbf P=(P,\le,0)$ with $0$ visualized in Fig.~3:

\vspace*{-3mm}

\begin{center}
\setlength{\unitlength}{7mm}
\begin{picture}(6,6)
\put(3,1){\circle*{.3}}
\put(1,3){\circle*{.3}}
\put(5,3){\circle*{.3}}
\put(0,5){\circle*{.3}}
\put(2,5){\circle*{.3}}
\put(4,5){\circle*{.3}}
\put(6,5){\circle*{.3}}
\put(3,1){\line(-1,1)2}
\put(3,1){\line(1,1)2}
\put(1,3){\line(-1,2)1}
\put(1,3){\line(1,2)1}
\put(5,3){\line(-1,2)1}
\put(5,3){\line(1,2)1}
\put(2.85,.3){$0$}
\put(.35,2.85){$a$}
\put(5.4,2.85){$b$}
\put(-.15,5.4){$c$}
\put(1.85,5.4){$d$}
\put(3.85,5.4){$e$}
\put(5.85,5.4){$g$}
\put(-.3,-.75){{\rm Figure~3. Poset $\mathbf P$ with $0$}}
\end{picture}
\end{center}

\vspace*{4mm}

Let $f\in M(P)$ be defined as follows:
\[
\begin{array}{r|c|c|c|c|c|c|c}
   x & 0 & a & b & c & d & e & g \\
\hline
f(x) & 0 & d & e & a & a & b & b
\end{array}
\]
Then $f$ is an $\perp$-morphism of $\mathbf P$, but it is not monotone since $a\le c$, but $f(a)=d\not\le a=f(c)$, and does not satisfy $f(x)\le x$ for all $x\in P$ since $f(a)=d\not\le a$.
\end{example}

\begin{lemma}
Let $\mathbf P=(P,\le,0)$ be a poset with $0$ and $f$ a bijective $\perp$-morphism of $\mathbf P$ such that $f^{-1}$ is a $\perp$-morphism of $\mathbf P$, too. Then $f$ and $f^{-1}$ are orthogonally adjoint.
\end{lemma}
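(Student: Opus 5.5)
We have to show that for all $a,b\in P$ we have $f(a)\perp b$ if and only if $a\perp f^{-1}(b)$. The plan is to use the two $\perp$-morphism hypotheses, one for each direction, and bijectivity of $f$ to rewrite elements in the form needed.

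For the forward direction, assume $f(a)\perp b$, i.e.\ $f(a)\wedge b=0$. Apply the hypothesis that $f^{-1}$ is a $\perp$-morphism to the orthogonal pair $f(a),b$. This gives $f^{-1}\big(f(a)\big)\perp f^{-1}(b)$, and since $f$ is bijective $f^{-1}\big(f(a)\big)=a$. Hence $a\perp f^{-1}(b)$.

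For the converse, assume $a\perp f^{-1}(b)$. Apply the hypothesis that $f$ is a $\perp$-morphism. This yields $f(a)\perp f\big(f^{-1}(b)\big)$. Using surjectivity, $f\big(f^{-1}(b)\big)=b$, so $f(a)\perp b$.

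Together the two directions give the defining equivalence of orthogonal adjointness for all $x,y\in P$, so $f$ and $f^{-1}$ are orthogonally adjoint. There is no real obstacle. The only point of care is notational: here $f^{-1}$ denotes the genuine inverse mapping, not the preimage operator on subsets introduced earlier. Also, $x\perp y$ for single elements is read via the identification of singletons with elements, meaning that $x\wedge y$ exists and equals $0$ in the sense $L(x,y)=\{0\}$. The $\perp$-morphism condition is stated in exactly those terms, so the argument goes through without needing any meets to exist.
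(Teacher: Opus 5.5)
Your proposal is correct and follows essentially the same approach as the paper. The paper runs the same chain $f(a)\wedge b=0\Rightarrow a\wedge f^{-1}(b)=0\Rightarrow f(a)\wedge f\bigl(f^{-1}(b)\bigr)=0\Rightarrow f(a)\wedge b=0$. It uses that $f^{-1}$ is a $\perp$-morphism for the first step and that $f$ is a $\perp$-morphism for the return, together with $f^{-1}\bigl(f(a)\bigr)=a$ and $f\bigl(f^{-1}(b)\bigr)=b$.
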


\begin{proof}
For $a,b\in P$ any of the following statements implies the next one:
\[
f(a)\wedge b=0,\enspace f^{-1}\big(f(a)\big)\wedge f^{-1}(b)=0,\enspace a\wedge f^{-1}(b)=0,\enspace f(a)\wedge f\big(f^{-1}(b)\big)=0,\enspace f(a)\wedge b=0.
\]
\end{proof}

Observe that any homomorphism $f$ from a meet-semilattice $\mathbf P=(P,\wedge,0)$ with $0$ to a meet-semilattice $\mathbf Q=(Q,\wedge,0)$ with $0$ is an $\perp$-morphism from $\mathbf P$ to $\mathbf Q$ since $a,b\in P$ and $a\wedge b=0$ imply $f(a)\wedge f(b)=f(a\wedge b)=f(0)=0$.

It is a question if for a couple of orthogonally adjoint mappings $f$ and $g$ in a poset $\mathbf P=(P,\le,0)$ with $0$ there exist extensions of $f$ and $g$, respectively, from $P$ to $D(\mathbf P)$ being still orthogonally adjoint. Unfortunately, this need not be the case in general as the following example shows.

\begin{example}
Consider the poset $\mathbf P=(P,\le,0)$ with $0$ depicted in Fig.~4:
\vspace*{-3mm}

\begin{center}
\setlength{\unitlength}{7mm}
\begin{picture}(4,4)
\put(2,1){\circle*{.3}}
\put(0,3){\circle*{.3}}
\put(2,3){\circle*{.3}}
\put(4,3){\circle*{.3}}
\put(2,1){\line(-1,1)2}
\put(2,1){\line(0,1)2}
\put(2,1){\line(1,1)2}
\put(1.85,.3){$0$}
\put(-.15,3.4){$a$}
\put(1.85,3.4){$b$}
\put(3.85,3.4){$c$}
\put(-.35,-.75){{\rm Figure~4. Poset $\mathbf P$}}
\end{picture}
\end{center}

\vspace*{4mm}

and $f\in M(P)$ defined by
\[
\begin{array}{l|c|c|c|c}
x    & 0 & a & b & c \\
\hline
f(x) & 0 & 0 & b & c
\end{array}
\]
Then $f$ is orthogonally adjoint to itself. This can be seen as follows: Let $x,y\in P$. \\
If $x,y\in\{0,b,c\}$ then $f(x)\wedge y=x\wedge y=x\wedge f(y)$. \\
If $x=a$ then $f(x)\wedge y=f(a)\wedge y=0\wedge y=0=a\wedge f(y)=x\wedge f(y)$. \\
If $y=a$ then $f(x)\wedge y=f(x)\wedge a=0=x\wedge0=x\wedge f(a)=x\wedge f(y)$.

The complete lattice $\mathbf D(\mathbf P)$ is visualized in Fig.~5:

\vspace*{-3mm}

\begin{center}
\setlength{\unitlength}{7mm}
\begin{picture}(6,6)
\put(3,1){\circle*{.3}}
\put(1,3){\circle*{.3}}
\put(3,3){\circle*{.3}}
\put(5,3){\circle*{.3}}
\put(3,5){\circle*{.3}}
\put(3,1){\line(-1,1)2}
\put(3,1){\line(0,1)4}
\put(3,1){\line(1,1)2}
\put(3,5){\line(-1,-1)2}
\put(3,5){\line(1,-1)2}
\put(2.85,.3){$0$}
\put(.35,2.85){$a$}
\put(3.4,2.85){$b$}
\put(5.4,2.85){$c$}
\put(2.85,5.4){$1$}
\put(-1.15,-.75){{\rm Figure~5. Complete lattice $\mathbf D(\mathbf P)$}}
\end{picture}
\end{center}

\vspace*{4mm}

Suppose there exist extensions $g,h\in M\big(D(\mathbf P)\big)$ of $f\in M(P)$ such that $g$ and $h$ are orthogonally adjoint. Then we conclude: \\
Since $1\wedge h(a)=1\wedge0=0$ we have $g(1)\wedge a=0$ and hence $g(1)\in\{0,b,c\}$. \\
Since $1\wedge h(b)=1\wedge b=b\ne0$ we have $g(1)\wedge b\ne0$ and hence $g(1)\in\{b,1\}$. \\
Since $1\wedge h(c)=1\wedge c=c\ne0$ we have $g(1)\wedge c\ne0$ and hence $g(1)\in\{c,1\}$. \\
This shows $g(1)\in\{0,b.c\}\cap\{b,1\}\cap\{c,1\}=\emptyset$ which is a contradiction. Therefore such extensions $g$ and $h$ of $f$ do not exist.
\end{example}

\begin{theorem}
Let $\mathbf P=(P,\le,0)$ be a poset with $0$ and $f,g\in M(P)$ be monotone and orthogonally adjoint. Moreover, assume that in $\mathbf D(\mathbf P)$ meet distributes over arbitrary joins, i.e.
\[
\left(\bigvee_{i\in I}A_i\right)\wedge A=\bigvee_{i\in I}(A_i\wedge A)
\]
whenever $A,A_i\in D(\mathbf P)$ for all $i\in I$. Then there exist orthogonally adjoint {\rm(}monotone{\rm)} extensions $\overline f$ and $\overline g$ of $f$ and $g$, respectively, from $P$ to $D(\mathbf P)$.
\end{theorem}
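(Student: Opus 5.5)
We extend $f$ and $g$ to $D(\mathbf P)$ by taking joins of images. For $A\in D(\mathbf P)$ put
\[
\overline f(A):=\bigvee_{x\in A}L\big(f(x)\big),\qquad \overline g(A):=\bigvee_{x\in A}L\big(g(x)\big),
\]
where the joins are taken in $\mathbf D(\mathbf P)$. First we check that these are extensions. If $A=L(a)$, then every $x\in A$ satisfies $x\le a$, so monotonicity of $f$ gives $L(f(x))\subseteq L(f(a))$. Since $a\in A$, the join is exactly $L(f(a))$. Hence $\overline f(L(a))=L(f(a))$, and similarly for $\overline g$. Monotonicity of $\overline f$ and $\overline g$ in $A$ is immediate, because enlarging $A$ enlarges the index set of the join.

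The main step is orthogonal adjointness in $\mathbf D(\mathbf P)$, i.e.\ $\overline f(A)\wedge B=L(0)$ if and only if $A\wedge\overline g(B)=L(0)$, where the meet in $\mathbf D(\mathbf P)$ is intersection and the bottom is $L(0)=\{0\}$. By the assumed distributivity,
\[
\overline f(A)\wedge B=\bigvee_{x\in A}\big(L(f(x))\wedge B\big).
\]
This join is $\{0\}$ if and only if $L(f(x))\cap B=\{0\}$ for every $x\in A$.

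Next we express $B$ itself as a join. Every $B\in D(\mathbf P)$ satisfies $B=\bigvee_{y\in B}L(y)$, since $B=LU(B)$ and $P$ is join-dense. Applying distributivity a second time gives
\[
L(f(x))\wedge B=\bigvee_{y\in B}\big(L(f(x))\cap L(y)\big).
\]
Hence $\overline f(A)\wedge B=\{0\}$ if and only if $L(f(x))\cap L(y)=\{0\}$ for all $x\in A$ and $y\in B$. The condition $L(f(x))\cap L(y)=\{0\}$ is precisely $f(x)\wedge y=0$ in the sense of $\perp$ in $\mathbf P$.

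By the orthogonal adjointness of $f$ and $g$, this is equivalent to $x\wedge g(y)=0$ for all $x\in A$ and $y\in B$. Running the same distributivity argument symmetrically, now expanding $\overline g(B)$ and $A$ as joins, shows that this in turn is equivalent to $A\wedge\overline g(B)=\{0\}$.

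The step I expect to need care is the use of distributivity in the presence of $\perp$: a join in $\mathbf D(\mathbf P)$ equals $\{0\}$ exactly when each joinand is $\{0\}$, which is clear since $\{0\}$ is the bottom. We must also make sure that meets in $\mathbf D(\mathbf P)$ are intersections, so that $L(u)\cap L(v)=\{0\}$ correctly encodes $u\perp v$ in $\mathbf P$, including the case where $u\wedge v$ does not exist in $P$. Both facts are standard for the Dedekind-MacNeille completion. Without the distributivity hypothesis the reduction to pairs of elements of $P$ fails, which is exactly what the preceding example exhibits.
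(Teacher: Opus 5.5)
Your proposal is correct and matches the paper's proof: the same extensions $\overline f(A)=\bigvee_{x\in A}L\big(f(x)\big)$ and $\overline g(A)=\bigvee_{x\in A}L\big(g(x)\big)$, the same monotonicity argument for the extension property, and the same use of distributivity to reduce $\overline f(A)\wedge B=\{0\}$ to $f(x)\perp y$ for all $x\in A$, $y\in B$, and symmetrically for $g$. The only difference is presentational: you apply the one-sided distributive law twice, expanding $\overline f(A)$ and then $B=\bigvee_{y\in B}L(y)$, whereas the paper expands both joins in a single displayed step.
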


\begin{proof}
We define $\overline f,\overline g\in M\big(D(\mathbf P)\big)$ by
\begin{align*}
\overline f(A) & :=\bigvee_{x\in A}L\big(f(x)\big), \\
\overline g(A) & :=\bigvee_{x\in A}L\big(g(x)\big)
\end{align*}
for all $A\in D(\mathbf P)$. Of course, $\overline f$ and $\overline g$ are monotone. Now let $a\in P$ and $A,B\in D(\mathbf P)$. Since $f$ and $g$ are monotone, we have
\begin{align*}
\overline f\big(L(a)\big) & =\bigvee_{x\in L(a)}L\big(f(x)\big)=L\big(f(a)\big), \\
\overline g\big(L(a)\big) & =\bigvee_{x\in L(a)}L\big(f(x)\big)=L\big(g(a)\big),
\end{align*}
i.e.\ $\overline f$ and $\overline g$ are extensions of $f$ and $g$, respectively. Now
\begin{align*}
\overline f(A)\wedge B & =\left(\bigvee_{x\in A}L\big(f(x)\big)\right)\wedge\left(\bigvee_{y\in B}L(y)\right)=\bigvee_{x\in A,y\in B}\Big(L\big(f(x)\big)\cap L(y)\Big), \\
A\wedge\overline g(B) & =\left(\bigvee_{x\in A}L(x)\right)\wedge\left(\bigvee_{y\in B}L\big(g(y)\big)\right)=\bigvee_{x\in A,y\in B}\Big(L(x)\cap L\big(g(y)\big)\Big).
\end{align*}
Now the following are equivalent:

$\overline f(A)\wedge B=0$, \\
$L\big(f(x)\big)\cap L(y)=0$ for all $x\in A$ and all $y\in B$, \\
$f(x)\wedge y=0$ for all $x\in A$ and all $y\in B$.

Moreover, the following are equivalent:

$A\wedge\overline g(B)=0$, \\
$L(x)\cap L\big(g(y)\big)=0$ for all $x\in A$ and all $y\in B$, \\
$x\wedge g(y)=0$ for all $x\in A$ and all $y\in B$.

Since $f$ and $g$ are orthogonally adjoint in $\mathbf P$, we conclude that $\overline f(A)\wedge B=0$ and $A\wedge\overline g(B)=0$ are equivalent, i.e.\ $\overline f$ and $\overline g$ are orthogonally adjoint in $\mathbf D(\mathbf P)$.
\end{proof}

Let $\mathbf P=(P,\le,0)$ be a poset with $0$, $a\in P$ and $a_i\in P$ for all $i\in I$. An {\em ideal} of $\mathbf P$ is downward closed non-empty subset of $P$. Denote by $\Id(\mathbf P)$ the set of all ideals of $\mathbf P$ and by $I(a)$ the principal ideal $L(a)$ of $a$. The set $\Id(\mathbf P)$ is closed under arbitrary unions and intersections and hence $\BId(\mathbf P):=(\Id(\mathbf P),\subseteq)$ is a complete sublattice of $(2^P,\subseteq)$. Moreover, $x\mapsto I(x)$ is an embedding of $\mathbf P$ into $\BId(\mathbf P)$ which preserves arbitrary intersections, i.e.\ $a=\bigwedge\limits_{i\in I}a_i$ holds in $\mathbf P$ if and only $I(a)=\bigcap\limits_{i\in I}I(a_i)$ holds in $\BId(\mathbf P)$.

\begin{theorem}
Let $\mathbf P=(P,\le,0)$ be a poset with $0$ and $f,g\in M(P)$ be monotone and orthogonally adjoint. Then there exist orthogonally adjoint {\rm(}monotone{\rm)} extensions $\overline f$ and $\overline g$ of $f$ and $g$, respectively, from $P$ to $\Id(\mathbf P)$.
\end{theorem}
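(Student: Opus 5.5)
Following the proof of the preceding theorem, I define $\overline f,\overline g\colon\Id(\mathbf P)\to\Id(\mathbf P)$ by
\[
\overline f(J):=\bigcup_{x\in J}I\big(f(x)\big),\qquad\overline g(J):=\bigcup_{x\in J}I\big(g(x)\big)
\]
for all $J\in\Id(\mathbf P)$. The point is that joins in $\BId(\mathbf P)$ are plain unions, and intersection distributes over arbitrary unions in $2^P$. So the distributivity hypothesis of the previous theorem, which failed in general for $\mathbf D(\mathbf P)$, holds automatically here.

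I first check well-definedness. A union of principal ideals is non-empty, since $J\ne\emptyset$, and it is downward closed, so $\overline f(J)$ and $\overline g(J)$ are ideals. Monotonicity of $\overline f$ and $\overline g$ is immediate from the definition. Next, for $a\in P$ I show $\overline f\big(I(a)\big)=I\big(f(a)\big)$. The inclusion $\supseteq$ holds because $a\in I(a)$. The inclusion $\subseteq$ holds because $x\le a$ implies $f(x)\le f(a)$ by monotonicity of $f$. The same argument gives $\overline g\big(I(a)\big)=I\big(g(a)\big)$. Hence $\overline f$ and $\overline g$ extend $f$ and $g$ via the embedding $x\mapsto I(x)$.

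For orthogonal adjointness, recall that the least element of $\BId(\mathbf P)$ is $\{0\}=I(0)$ and that meets are intersections. For $J,K\in\Id(\mathbf P)$ I compute
\[
\overline f(J)\cap K=\bigcup_{x\in J,\,y\in K}\Big(I\big(f(x)\big)\cap I(y)\Big),\qquad J\cap\overline g(K)=\bigcup_{x\in J,\,y\in K}\Big(I(x)\cap I\big(g(y)\big)\Big).
\]
The first equality uses $K=\bigcup_{y\in K}I(y)$, and the second is analogous. Now $I(u)\cap I(v)=\{0\}$ exactly when $L(u,v)=\{0\}$, that is, when $u\wedge v=0$ in the paper's sense. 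Therefore $\overline f(J)\cap K=\{0\}$ if and only if $f(x)\wedge y=0$ for all $x\in J$ and all $y\in K$. Likewise $J\cap\overline g(K)=\{0\}$ if and only if $x\wedge g(y)=0$ for all $x\in J$ and all $y\in K$. Since $f$ and $g$ are orthogonally adjoint, these two conditions agree pointwise in $(x,y)$, so $\overline f$ and $\overline g$ are orthogonally adjoint.

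I expect no real obstacle. The only delicate points are that the meet of two ideals really is their intersection, which holds because an intersection of ideals is again an ideal and contains $0$, and that orthogonality of principal ideals corresponds exactly to $\perp$ in $\mathbf P$. Both follow directly from the definitions given before the statement.
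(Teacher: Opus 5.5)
Your proposal is correct and follows the paper's proof essentially verbatim. You use the same definitions $\overline f(J)=\bigcup_{x\in J}I\big(f(x)\big)$ and $\overline g(J)=\bigcup_{x\in J}I\big(g(x)\big)$, the same appeal to monotonicity for the extension property, and the same distribution of intersection over unions, which reduces orthogonality in $\BId(\mathbf P)$ to pointwise orthogonality in $\mathbf P$. Your extra checks — that $\overline f(J)$ is an ideal, that meets are intersections with bottom $\{0\}$, and that $I(u)\cap I(v)=\{0\}$ iff $u\wedge v=0$ — are details the paper leaves implicit.
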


\begin{proof}
We define $\overline f,\overline g\in M\big(\Id(\mathbf P)\big)$ by
\begin{align*}
\overline f(I) & :=\bigcup_{i\in I}I\big(f(i)\big), \\
\overline g(I) & :=\bigcup_{i\in I}I\big(g(i)\big)
\end{align*}
for all $I\in\Id(\mathbf P)$. Of course, $\overline f$ and $\overline g$ are monotone. Now let $a\in P$ and $I,J\in\Id(\mathbf P)$. Since $f$ and $g$ are monotone, we have
\begin{align*}
\overline f\big(I(a)\big) & =\bigcup_{i\in I(a)}I\big(f(i)\big)=I\big(f(a)\big), \\
\overline g\big(I(a)\big) & =\bigcup_{i\in I(a)}I\big(g(i)\big)=I\big(g(a)\big),
\end{align*}
i.e.\ $\overline f$ and $\overline g$ are extensions of $f$ and $g$, respectively. Now
\begin{align*}
\overline f(I)\cap J & =\left(\bigcup_{i\in I}I\big(f(i)\big)\right)\cap\left(\bigcup_{j\in J}I(j)\right)=\bigcup_{i\in I,j\in J}\Big(I\big(f(i)\big)\cap I(j)\Big), \\
I\cap\overline g(J) & =\left(\bigcup_{i\in I}I(i)\right)\cap\left(\bigcup_{j\in J}I\big(g(j)\big)\right)=\bigcup_{i\in I,j\in J}\Big(I(i)\cap I\big(g(j)\big)\Big).
\end{align*}
Now the following are equivalent:
	
$\overline f(I)\cap J=0$, \\
$I\big(f(i)\big)\cap I(j)=0$ for all $i\in I$ and all $j\in J$, \\
$f(i)\wedge j=0$ for all $i\in I$ and all $j\in J$.
	
Moreover, the following are equivalent:
	
$I\cap\overline g(J)=0$, \\
$I(i)\cap I\big(g(j)\big)=0$ for all $i\in I$ and all $j\in J$, \\
$i\wedge g(j)=0$ for all $i\in I$ and all $j\in J$.
	
Since $f$ and $g$ are orthogonally adjoint in $\mathbf P$, we conclude that $\overline f(I)\cap J=0$ and $I\cap\overline g(J)=0$ are equivalent, i.e.\ $\overline f$ and $\overline g$ are orthogonally adjoint in $\BId(\mathbf P)$.
\end{proof}

Authors' addresses:

Michal Botur \\
Palack\'y University Olomouc \\
Faculty of Science \\
Department of Algebra and Geometry \\
17.\ listopadu 12 \\
771 46 Olomouc \\
Czech Republic \\
michal.botur@upol.cz

Ivan Chajda \\
Palack\'y University Olomouc \\
Faculty of Science \\
Department of Algebra and Geometry \\
17.\ listopadu 12 \\
771 46 Olomouc \\
Czech Republic \\
ivan.chajda@upol.cz

Helmut L\"anger \\
TU Wien \\
Faculty of Mathematics and Geoinformation \\
Institute of Discrete Mathematics and Geometry \\
Wiedner Hauptstra\ss e 8-10 \\
1040 Vienna \\
Austria, and \\
Palack\'y University Olomouc \\
Faculty of Science \\
Department of Algebra and Geometry \\
17.\ listopadu 12 \\
771 46 Olomouc \\
Czech Republic \\
helmut.laenger@tuwien.ac.at

\begin{thebibliography}9
\bibitem B
G.~Birkhoff, Lattice Theory. AMS, New York 1940.
\bibitem{CKL}
I.¸~Chajda, M.~Kola\v r\'ik and H.~L\"anger, Induced orthogonality in semilattices with $0$ and in pseudocomplemented lattices and posets. Order {\bf42} (2025), 577--592.
\bibitem{CL}
I.~Chajda and H.~L\"anger, Residuated mappings and homomorphisms in posets. Miskolc Math.\ Notes (submitted).
\bibitem{PPS}
J.~Paseka, J.K.~Putra and R.~Smolka, On $\tau$-based orthomodular dynamic algebras. Internat.\ J.\ Theoret.\ Phys.\ {\bf65} (2026), Paper No.\ 140, 37 pp.
\end{thebibliography}
\end{document}